\documentclass[10pt, reqno]{amsart}

\usepackage{amsmath,amssymb,amsthm,amsfonts,verbatim}
\usepackage{microtype}
\usepackage[all,2cell]{xy}
\usepackage{mathtools}
\usepackage{graphicx}
\usepackage{pinlabel}
\usepackage{hyperref}
\usepackage{mathrsfs}
\usepackage{color}
\usepackage{enumerate}
\usepackage{cite}

\CompileMatrices

\usepackage[top=1.3in,bottom=1.9in,left=1.2in,right=1.2in]{geometry}

\theoremstyle{plain}
\newtheorem{theorem}{Theorem}[section]
\newtheorem{maintheorem}{Theorem}

\newtheorem{maincorollary}[maintheorem]{Corollary}

\newtheorem{proposition}[theorem]{Proposition}
\newtheorem{lemma}[theorem]{Lemma}
\newtheorem{conjecture}[theorem]{Conjecture}

\theoremstyle{definition}
\newtheorem{definition}[theorem]{Definition}

\newtheorem{remark}[theorem]{Remark}

\newcommand{\nc}{\newcommand}
\nc{\dmo}{\DeclareMathOperator}

\nc{\Q}{\mathbb{Q}}
\nc{\F}{\mathbb{F}}
\nc{\R}{\mathbb{R}}
\nc{\Z}{\mathbb{Z}}
\nc{\C}{\mathbb{C}}
\nc{\Ell}{\mathcal{L}}
\nc{\M}{\mathcal{M}}
\nc{\K}{\mathcal{K}}
\nc{\I}{\mathcal{I}}
\nc{\U}{\mathcal U}
\nc{\disk}{\mathbb{D}}
\nc{\hyp}{\mathbb{H}}

\nc{\CP}{\mathbb{CP}}
\nc{\cS}{\mathcal{S}}
\dmo{\Mod}{Mod}
\dmo{\PMod}{PMod}
\dmo{\LMod}{LMod}
\dmo{\Diff}{Diff}
\dmo{\Homeo}{Homeo}
\dmo{\dist}{dist}
\dmo\BDiff{BDiff}
\dmo\SO{SO}
\dmo\Hom{Hom}
\dmo\SL{SL}
\dmo\Sp{Sp}
\dmo\rank{rank}
\dmo\sig{sig}
\dmo\Out{Out}
\dmo\Aut{Aut}
\dmo\Inn{Inn}
\dmo\GL{GL}
\dmo\PSL{PSL}
\dmo\BHomeo{BHomeo}
\dmo\EHomeo{EHomeo}
\dmo\EDiff{EDiff}
\nc\Sig{\Sigma}
\dmo\Teich{Teich}
\dmo\Fix{Fix}
\nc{\pair}[1]{\langle #1 \rangle}
\nc{\abs}[1]{\left| #1 \right|}
\nc{\action}{\circlearrowright}
\nc{\norm}[1]{\left | \left | #1 \right | \right |}
\nc{\abcd}[4]{\left(\begin{array}{cc} #1 & #2 \\ #3 & #4 \end{array}\right)}
\nc{\into}\hookrightarrow
\dmo{\Isom}{Isom}
\nc{\normal}{\vartriangleleft}
\dmo{\Vol}{Vol}
\dmo{\im}{Im}
\dmo{\Push}{Push}
\dmo{\Conf}{Conf}
\dmo{\PConf}{PConf}
\dmo{\id}{id}
\dmo{\Jac}{Jac}
\dmo{\Pic}{Pic}
\dmo{\Stab}{Stab}
\dmo{\Arf}{Arf}
\dmo{\End}{End}
\dmo{\Gal}{Gal}
\dmo{\lcm}{lcm}
\dmo{\ab}{ab}
\dmo{\opp}{op}
\dmo{\SU}{SU}
\dmo{\tr}{tr}

\nc{\Span}[1]{\operatorname{Span}(#1)}

\renewcommand{\epsilon}{\varepsilon}
\renewcommand{\tilde}{\widetilde}

\renewcommand{\le}{\leqslant}
\renewcommand{\ge}{\geqslant}
\nc{\coloneq}{\mathrel{\mathop:}\mkern-1.2mu=}
\nc{\margin}[1]{\marginpar{\scriptsize #1}}
\nc{\para}[1]{\medskip\noindent\textbf{#1.}}
\nc{\red}[1]{\textcolor{red}{#1}}

\usepackage{listings}

\title{Linear-central filtrations and the image of the Burau representation}

\author{Nick Salter}
\email{nks@math.columbia.edu}
\date{March 26, 2019}

\address{Department of Mathematics, Columbia University, 2990 Broadway, New York, NY 10027}

\begin{document}
\maketitle

\begin{abstract} 
The Burau representation is a fundamental bridge between the braid group and diverse other topics in mathematics. A 1974 question of Birman asks for a description of the image; in this paper we give a ``strong approximation'' to the answer. Since a 1984 paper of Squier it has been known that the Burau representation preserves a certain Hermitian form. We show that the Burau image is dense in this unitary group relative to a topology induced by a naturally-occurring filtration. We expect that the methods of the paper should extend to many other representations of the braid group and perhaps ultimately inform the study of knot and link polynomials. 
\end{abstract} 

\section{introduction}
The (unreduced) Burau representation of the braid group $B_n$ is a homomorphism
\[
\beta: B_n \to \GL_n(\Lambda),
\]
where $\Lambda = \Z[t,t^{-1}]$ is the ring of integral Laurent polynomials (c.f. Section \ref{section:buraubasics}). This is a fundamental object that connects the theory of braids to fields as diverse as low-dimensional topology, knot theory, dynamics, geometric group theory, arithmetic groups, and algebraic geometry. See the references \cite{long, burau, mcmullen, churchfarb, venky, BMP} for just a few representative examples of the appearance of the Burau representation across mathematics. 

Historically much effort was devoted to the question of whether $\beta$ is injective, culminating in the work of Moody, Long--Paton, and Bigelow \cite{moody, LP, bigelow}. We consider here the complimentary question of determining the {\em image} of $\beta$; this is posed in Birman's seminal book \cite{birman} as Question 14. 

In \cite{squier}, Squier shows that the Burau representation is unitary with respect to a certain Hermitian form $J$.  Let $\Gamma$ denote the corresponding unitary group (c.f. Definition \ref{definition:gamma}). To formulate our result, we equip $\Gamma$ with a natural ``$s$-adic topology'' (c.f. Section \ref{section:sadic}, see also immediately below).

\begin{maintheorem}\label{theorem:image}
For $n\ge 5$, $\beta(B_n)$ is dense in $\Gamma$ with respect to the $s$-adic topology.
\end{maintheorem}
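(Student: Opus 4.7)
The plan is to define a descending ``linear-central'' filtration $\Gamma = \Gamma_0 \supseteq \Gamma_1 \supseteq \Gamma_2 \supseteq \cdots$ of $\Gamma$ by $s$-adic congruence subgroups, where $\Gamma_k$ consists of elements of $\Gamma$ congruent to the identity modulo $s^k$ (with $s$ the uniformizer defining the $s$-adic topology, e.g.\ $s = t-1$). Density then becomes equivalent to surjectivity of the composition
\[
B_n \xrightarrow{\beta} \Gamma \to \Gamma/\Gamma_k
\]
for every $k \ge 0$, which I would prove by induction on $k$. The ``linear-central'' terminology from the title signals that each graded piece $\Gamma_k/\Gamma_{k+1}$ is a central abelian subgroup of $\Gamma/\Gamma_{k+1}$, and thus inherits a natural module structure over the degree-zero quotient $\Gamma/\Gamma_1$ (equivalently, over a coefficient ring such as $\Lambda/(s)$ or $\Z[S_n]$).

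The base case $k=1$ amounts to understanding $\beta$ modulo $s$: specializing $t \mapsto 1$ sends the Burau representation to the permutation representation of $S_n$, and a direct check on the Artin generators $\sigma_i$ should identify their images with all of $\Gamma/\Gamma_1$. For the inductive step, assuming surjectivity $B_n \onto \Gamma/\Gamma_k$, it suffices to show that $\beta(B_n) \cap \Gamma_k$ surjects onto the abelian quotient $\Gamma_k/\Gamma_{k+1}$.

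The workhorse is commutator calculus. Since $[\Gamma_i, \Gamma_j] \subseteq \Gamma_{i+j}$, the associated graded group $\bigoplus_k \Gamma_k/\Gamma_{k+1}$ carries the structure of a graded Lie algebra over the degree-zero ring. Iterated commutators of the Burau images of $\sigma_1, \ldots, \sigma_{n-1}$ --- each of which already lies in $\Gamma_1$ --- then produce explicit elements in every $\Gamma_k/\Gamma_{k+1}$, and the inductive step reduces to the Lie-algebraic assertion that these brackets generate each graded piece as a module over the degree-zero part.

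The hard part, and the main obstacle, is carrying out this generation step. It requires (i) a clean description of each $\Gamma_k/\Gamma_{k+1}$ as a module, most plausibly in terms of the degree-one piece together with the symplectic/Hermitian structure coming from the form $J$ and some symmetric- or divided-power construction; and (ii) the demonstration that the Lie subalgebra generated by the Burau $\sigma_i$'s is the whole thing. I would attack (ii) by decomposing the degree-one graded piece into irreducible $S_n$-summands, producing explicit commutators of Burau matrices that project nontrivially onto each summand, and then propagating up by the bracket. The restriction $n \ge 5$ is most likely forced here, either by standard irreducibility/simplicity facts for $S_n$-representations that fail in low degree, or by analogous small-rank exceptions for the $\Sp$-type Lie algebra underlying $\Gamma$.
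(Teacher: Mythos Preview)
Your overall architecture---the $s$-adic filtration, the reduction to surjectivity onto each graded piece, and the passage to the associated graded $\Z$-Lie algebra---matches the paper exactly. But there is a genuine gap at the step you flag as ``the hard part,'' and it is not merely a matter of carrying out a computation you have outlined: the strategy you propose for (ii) \emph{cannot work as stated}.

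You write that you would ``propagate up by the bracket,'' i.e.\ show that the $\Z$-Lie subalgebra generated in degree $1$ is everything. The paper computes (Lemma~\ref{lemma:brackets}) that this is false integrally: one has $\pair{G_1,G_{2k-1}} = G_{2k}$ in even degree, but in odd degree only $2G_{2k+1} \le \pair{G_1,G_{2k}}$, with the index-$2$ defect genuine. So iterated brackets of degree-$1$ elements miss half of each odd graded piece, and no amount of $S_n$-representation theory repairs this, since the obstruction is a $2$-torsion phenomenon in the cokernel, not a missing irreducible summand. (Incidentally, the $\sigma_i$ themselves do \emph{not} lie in $\Gamma[s]$---their mod-$s$ image is a transposition---so the degree-$1$ generators are the pure braid elements $A_{ij}$, with $(\beta(A_{ij}))_{(1)} = X_{ij}$.)

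The paper's actual mechanism for closing this gap is quite different from anything in your sketch. One must produce braids that lie \emph{unexpectedly deep} in the filtration---deeper than the bracket structure alone predicts---and compute their leading term there. Concretely, a computer search locates an explicit element $\delta \in B_n$ (see \eqref{equation:delta}) lying in $\Gamma[s^5]$ whose degree-$5$ image hits the missing coset of $2G_5$ in $G_5$. The delicate point is then to propagate this single degree-$5$ calculation to all higher odd degrees: this requires going \emph{beyond} the associated graded, analyzing the degree-$(2k+1)$ term of a commutator of elements in $\Gamma[s]$ and $\Gamma[s^{2k-1}]$ via the Hermitian structure (Lemmas~\ref{lemma:2k+1}--\ref{lemma:commutes}), and showing the resulting formula is degree-independent modulo $\pair{G_1,G_{2k}}$. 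None of this is visible at the level of the graded Lie algebra alone, and it is the technical heart of the argument.
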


The $s$-adic topology is induced by the ``$s$-adic filtration'' $\{\Gamma[s^k]\}$ on $\Gamma$; since this notion lies at the center of our work, we briefly describe it here. The starting point is the fact (c.f. Lemma \ref{lemma:pure=s}) that the {\em pure} braid group $PB_n$ coincides with the kernel of the reduction map $\Gamma \to \GL_n(\Z)$ setting $t = 1$; accordingly we define $s = t-1$. Considering the {\em family} of reduction maps induced by the quotients $\Lambda \to \Lambda/ \pair{s^k}$ gives rise to the $s$-adic filtration and the $s$-adic topology. Roughly speaking, then, Theorem \ref{theorem:image} states that any $\gamma \in \Gamma$ can be approximated arbitrarily-well by the Burau representation, up to an error consisting of a matrix that is trivial modulo a large power of $s$.

The unreduced Burau representation fixes a vector $v$, leading to the definition of the {\em reduced} Burau representation 
\[
\beta': B_n \to \Gamma';
\]
here $\Gamma'$ is the restriction of $\Gamma$ to the $J$-orthogonal complement of $v$ (again see Definition \ref{definition:gamma}). It is an immediate corollary of Theorem \ref{theorem:image} that $\im(\beta')$ has the same density property as $\im(\beta)$. 

\begin{maincorollary}\label{corollary:reduced}
For $n\ge 5$, $\beta'(B_n)$ is dense in $\Gamma'$ with respect to the $s$-adic topology.
\end{maincorollary}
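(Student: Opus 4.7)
The plan is to derive Corollary \ref{corollary:reduced} as an essentially formal consequence of Theorem \ref{theorem:image}: the key observation is that $\beta'$ factors as $\beta$ followed by a continuous surjection $\Gamma \to \Gamma'$, and density is preserved under such maps.

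First I would make precise the restriction map. Since every element of $\Gamma$ preserves the Hermitian form $J$, any element fixing $v$ also preserves the $J$-orthogonal complement $v^\perp$, so restriction to $v^\perp$ gives a homomorphism $p \colon \Gamma_v \to \Gamma'$ from the stabilizer $\Gamma_v \coloneq \Stab_\Gamma(v)$. By the definition of $\Gamma'$ as the restriction of $\Gamma$ to $v^\perp$, the map $p$ is surjective, and by construction $\beta' = p \circ \beta$ (interpreting $\beta$ as taking values in $\Gamma_v$, which is legitimate because $\beta(B_n)$ fixes $v$). Continuity of $p$ with respect to the $s$-adic topologies is then routine: the filtrations $\{\Gamma[s^k]\}$ and $\{\Gamma'[s^k]\}$ are defined by the congruence $\gamma \equiv I \pmod{s^k}$, and reduction modulo $s^k$ commutes with restriction to an invariant subspace, so $p$ carries $\Gamma_v \cap \Gamma[s^k]$ into $\Gamma'[s^k]$.

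A minor preliminary, if not already built into Definition \ref{definition:gamma}, is to verify that $\Gamma_v = \Gamma$, so that density of $\beta(B_n)$ in $\Gamma$ transfers to density in $\Gamma_v$. This is automatic from Theorem \ref{theorem:image}: $\Gamma_v$ is closed in $\Gamma$ as the preimage of $v$ under the continuous evaluation $\gamma \mapsto \gamma v$, and it contains the dense subset $\beta(B_n)$. With this in place the corollary is immediate: by Theorem \ref{theorem:image}, $\beta(B_n)$ is dense in $\Gamma = \Gamma_v$, and applying the continuous surjection $p$ shows that $\beta'(B_n) = p(\beta(B_n))$ is dense in $p(\Gamma_v) = \Gamma'$. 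No substantive obstacle arises; the entire argument is soft topology once Theorem \ref{theorem:image} is in hand.
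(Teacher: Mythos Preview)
Your proposal is correct and matches the paper's treatment, which states only that this is ``an immediate corollary'' of Theorem~\ref{theorem:image} without writing out any argument; you have supplied exactly the obvious details (restriction to $v^\perp$ is a continuous surjection $\Gamma \to \Gamma'$, and continuous surjections preserve density). Note that your ``minor preliminary'' is in fact vacuous here: the condition $Av = v$ is already part of Definition~\ref{definition:gamma}, so $\Gamma_v = \Gamma$ by definition.
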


The question remains as to whether $\beta(B_n)$ is ``large'' within $\Gamma$ for other notions of the term. Despite our best efforts, we have been unable to find even a single element of $\Gamma$ that does not lie in the image of $\beta$. We therefore offer the following conjecture.
\begin{conjecture}
For $n \ge 5$, there are equalities
\[
\beta(B_n) = \Gamma
\]
and
\[
\beta'(B_n) = \Gamma'.
\]
\end{conjecture}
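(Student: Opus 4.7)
The plan is to promote the $s$-adic density of Theorem \ref{theorem:image} into honest equality; the reduced case $\beta'(B_n) = \Gamma'$ should follow by the same arguments restricted to the $J$-orthogonal complement of $v$. A preliminary observation is that density at the first level of the filtration already pins down the quotient $\Gamma/\Gamma[s]$: Theorem \ref{theorem:image} yields $\beta(B_n)\cdot\Gamma[s] = \Gamma$, so $\beta(B_n)$ surjects onto $\Gamma/\Gamma[s] \subseteq \GL_n(\Z)$; since $\beta$ reduces at $t=1$ to the permutation representation, the image of $\beta(B_n)$ in $\GL_n(\Z)$ is exactly $S_n$. Hence $\Gamma/\Gamma[s] = S_n$, and the conjecture reduces to proving the inclusion $\Gamma[s] \subseteq \beta(B_n)$---equivalently, every element of $\Gamma$ congruent to the identity modulo $s$ is realized as the Burau image of a pure braid.

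The natural continuation is to iterate along the filtration. For each target $\gamma \in \Gamma[s]$ and each $k$, Theorem \ref{theorem:image} produces a braid $b_k \in B_n$ with $\beta(b_k) \equiv \gamma \pmod{s^k}$; since $\gamma \equiv I \pmod s$ one may automatically take $b_k \in PB_n$. The sequence $\{\beta(b_k)\}$ converges $s$-adically to $\gamma$, and the question becomes whether $\beta(B_n)$ is closed in $\Gamma$ with respect to the $s$-adic topology.

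The main obstacle is precisely this closedness. Although $\bigcap_k \Gamma[s^k] = \{1\}$ so that $\Gamma$ embeds into its completion $\hat\Gamma = \varprojlim_k \Gamma/\Gamma[s^k]$, neither the approximating braids $b_k$ nor their Burau matrices are canonical, and the Laurent polynomial entries of $\beta(b_k)$ may drift to unbounded degree in $t$, preventing a naive limit argument. Two strategies suggest themselves. First, one might try to exhibit an explicit finite generating set for $\Gamma[s]$ and realize each generator by a pure braid; this would require a structural presentation for the unitary group of Squier's form that is not presently available. Second, one could attempt to control the Laurent degree of the $b_k$ directly---say by choosing them of minimal word length, or by imposing an auxiliary filtration on $\Gamma$ compatible with both the $s$-adic filtration and Laurent degree---and show that an appropriately-chosen approximating sequence stabilizes. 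Both routes demand structural information about $\Gamma$ beyond what the present techniques provide, which is presumably why the statement remains conjectural.
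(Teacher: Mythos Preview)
The statement is a \emph{conjecture} in the paper; there is no proof to compare against. Your proposal is not a proof either, and you are candid about this in the final paragraph. What you have written is a reasonable diagnosis of the obstruction rather than a resolution, so let me comment on the content.

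Your ``preliminary observation'' that $\Gamma/\Gamma[s] = S_n$ does not require Theorem~\ref{theorem:image}: this is already built into Definition~\ref{definition:gamma}, which explicitly requires $A \pmod{t-1} \in (S_n)_{\mathrm{perm}}$. So the reduction of the conjecture to the containment $\Gamma[s] \subseteq \beta(B_n)$ is immediate from the definitions and Lemma~\ref{lemma:pure=s}, and invoking density here is unnecessary.

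Beyond that, you correctly isolate the genuine gap: Theorem~\ref{theorem:image} gives $s$-adic density, and upgrading density to equality is exactly the question of whether $\beta(B_n)$ is closed in the $s$-adic topology. Your two suggested strategies (find a finite generating set for $\Gamma[s]$ and realize each generator; or control Laurent degree to force an approximating sequence to stabilize) are sensible but, as you note, neither is carried out. In particular, the second strategy would need some a priori bound relating the $s$-adic depth of an element of $\Gamma$ to the minimal Laurent degree of a braid realizing it, and no such bound is known. The paper itself remarks that the author was unable to find even a single element of $\Gamma$ outside $\beta(B_n)$, which is the evidence offered for the conjecture. Your write-up is an honest summary of the state of affairs but should not be labeled a proof.
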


We wish to draw attention to some questions that we believe the ideas of the paper could address. 

\para{The ``Alexander spectrum''..} Since Burau's original investigations \cite{burau}, it has been known that the Burau representation is intimately connected with the Alexander polynomial of the associated braid closure. Seifert \cite{seifert} established that any polynomial satisfying two properties known to hold for the Alexander polynomial does arise as the Alexander polynomial of some knot. We call the set of possible values for the Alexander polynomial the ``Alexander spectrum''. Given the close connection between Alexander and Burau, we ask whether Theorem \ref{theorem:image} could be used to supply a quick determination of the Alexander spectrum.

\para{...and the ``Jones spectrum'' (and beyond)} This line of questioning admits a natural extension to a vast class of representations of the braid group. Many such representations are valued in a (multivariate) Laurent polynomial ring, and possess the same structure of a ``linear-central filtration'' (c.f. Section \ref{section:sadic}) that lies at the heart of the present paper. We believe it is a worthwhile endeavor to determine the image of other representations of the braid group, especially the Jones representation that gives rise to the Jones polynomial. A sufficiently precise answer would be able to determine the ``Jones spectrum'' (i.e. the set of polynomials that arise at the Jones polynomial of some knot or link), and could potentially shed new light on the structure and topological meaning of the Jones polynomial.

\para{The work of McMullen and Venkataramana} In \cite{mcmullen}, McMullen observes that the specialization of the Burau representation at $t = \zeta_d$ (a $d^{th}$ root of unity) is the monodromy representation for the family of $d$-fold cyclic branched covers of the disk. Motivated by the ``monodromy arithmeticity problem'' of Griffiths--Schmid \cite[page 123]{GS}, McMullen posed the problem of determining for which pairs $(n,d)$ is the specialization of $\beta(B_n)$ at $t = \zeta_d$ an arithmetic group. In \cite{venky}, Venkataramana shows that $\beta(B_n) \pmod{t = \zeta_d}$ is arithmetic whenever $n \ge 2d$. We ask whether Theorem \ref{theorem:image} can be used to further probe the image of Burau at roots of unity beyond the range $n \ge 2d$ required by Venkataramana. 

\para{Application to the braid Torelli group} In a companion paper \cite{kordeksalter}, Kordek and the author use the techniques developed here to study the so-called {\em braid Torelli group} $\mathcal{BI}_n$. This group is defined as the kernel of $\beta \pmod{t = -1}$.  Hain has observed \cite{hain} that $\mathcal{BI}_n$ plays an important role in the study of the {\em period map} in the theory of algebraic curves, arising as the fundamental group of the ``branch locus of the period map''. Brendle--Margalit--Putman resolved the so-called ``Hain conjecture'' in \cite{BMP}, finding a simple infinite generating set for $\mathcal{BI}_n$ and yielding some topological information about the branch locus as a result. Despite this major advance, there is a very basic question about $\mathcal{BI}_n$ that is still unknown: is $\mathcal{BI}_n$ finitely generated? In \cite{kordeksalter}, the authors use the ideas of the present paper to find a new (finite-rank) abelian quotient of $\mathcal{BI}_n$. While this does not directly resolve the question of finite generation, it offers further evidence that $\mathcal{BI}_n$ is a rich and complicated object, and highlights the role that linear-central filtrations on the Burau representation can play in this area. 

\para{Summary of the argument} In brief, Theorem \ref{theorem:image} is accomplished by first recasting the density assertion as an equality $\mathfrak g(\Gamma) = \mathfrak g(\beta)$ of certain $\Z$-Lie algebras (Proposition \ref{proposition:density}), and then showing this equality. The  $\Z$-Lie algebra $\mathfrak g(\Gamma)$ is obtained from the $s$-adic filtration $\{\Gamma[s^k]\}$ on $\Gamma$ by considering the direct sum of successive quotients $\Gamma[s^k]/\Gamma[s^{k+1}]$ (see Definition \ref{definition:g}). $\mathfrak g(\beta)$ is then the subalgebra generated by the image of $\beta$. It is easy to see that the degree-$1$ summands of $\mathfrak g(\Gamma)$ and $\mathfrak g(\beta)$ are equal (c.f. Lemma \ref{lemma:basecase}), and so one might hope that $\mathfrak g(\Gamma)$ is generated as a $\Z$-Lie algebra in degree $1$ and so prove Theorem \ref{theorem:image} essentially ``for free''.

Unfortunately, this is only true rationally (c.f. Lemma \ref{lemma:brackets}), and so one must work harder to show the equality integrally. We accomplish this by induction. Our method is to find elements of $B_n$ that are ``unexpectedly deep'' in the $s$-adic filtration and develop a technique (c.f. equation \eqref{equation:2k+1} and Lemma \ref{lemma:main}) for determining their image in $\mathfrak g(\Gamma)$. We find the first such element $\delta$ in degree $5$ (c.f. \eqref{equation:delta}) by a computer search. The induction step is accomplished by seeing that the formula of Lemma \ref{lemma:main} is ``independent of degree modulo the Lie bracket'' (c.f. Lemma \ref{lemma:commutes}). This step of the argument is the most technical and requires a delicate analysis of the expansion of matrices as power series in $s$.

\para{Outline} Section \ref{section:buraubasics} describes the Burau representation, the invariant Hermitian form, and the target group $\Gamma$ determining the codomain of $\beta$. Section \ref{section:sadic} defines the $s$-adic filtration, the associated Lie algebras $\mathfrak g(\Gamma)$ and $\mathfrak g(\beta)$, and shows that each of these are subalgebras of a more explicit $\Z$-Lie algebra $\mathfrak g$ (Lemma \ref{lemma:gembed}). The other major result of Section \ref{section:sadic} is the brief Proposition \ref{proposition:density} which reduces Theorem \ref{theorem:image} to establishing the equality $\mathfrak g(\Gamma) = \mathfrak g(\beta)$. Section \ref{section:brackets} studies the Lie bracket structure of $\mathfrak g$. Section \ref{section:mainlemma} establishes the key lemmas \ref{lemma:main} and \ref{lemma:commutes}, which allow us to probe deeper into the structure of $\mathfrak g(\beta)$. Finally, the proof of Theorem \ref{theorem:image} is carried out in Section \ref{section:proof}. 

\para{Acknowledgements} I would like to thank Kevin Kordek for many useful discussions and for collaboration on some early results in the direction of Theorem \ref{theorem:image}. I am also indebted to Alex Suciu for pointing out the connection between the $s$-adic filtration and the augmentation ideal of $\Lambda$.

\section{The Burau representation}\label{section:buraubasics}
Here we recall the basic properties of the Burau representation. Following Squier \cite{squier}, we emphasize the presence of an invariant Hermitian form, although the particular form $J$ we employ is slightly different. The key results are the classical Lemma \ref{lemma:pure=s}, which characterizes $PB_n$ in the context of the Burau representation, and Lemma \ref{lemma:unitary}, which establishes the first properties of the form $J$.

Before proceeding, we pause briefly to fix notation for the braid group $B_n$ and its subgroup of pure braids $PB_n$. For more background on these groups, see e.g. \cite[Chapter 9]{FM}. The braid group on $n$ strands has a standard generating set 
\[
B_n = \pair{\sigma_1, \dots, \sigma_{n-1}}
\]
subject to the familiar braid and commutation relations; the effect of $\sigma_i$ is to exchange the $i^{th}$ and $(i+1)^{st}$ strands. The pure braid group $PB_n$ is generated by the ${n \choose 2}$ elements $A_{ij}$  for $1 \le i < j \le n$. The effect of $A_{ij}$ is to perform a full Dehn twist about a certain circle enclosing only strands $i,j$, leaving the other strands fixed.

\subsection{Construction}
There are several complimentary approaches to the construction of $\beta$ and $\beta'$. For our purposes we will not need to know more than the explicit matrices $\{\beta(\sigma_i)\}_{i = 1}^{n-1}$; see \cite[Section 3.2, example 3]{birman} for a construction via Fox calculus, and, e.g. \cite{bigelow} for a construction via covering space theory. There are competing conventions for $\beta(\sigma_i)$ that differ by a transpose and/or an exchange of $t$ for $t^{-1}$; we opt for the following:
\begin{definition}[Unreduced Burau representation]
$\beta: B_n \to \GL_n(\Lambda)$ is the homomorphism defined by setting
\begin{equation}\label{equation:betadef}
\beta(\sigma_i) = I_{i-1} \oplus \left(\begin{array}{cc} 1-t	& 1\\ t & 0  \end{array} \right) \oplus I_{n-i-1}.
\end{equation}
\end{definition}
It is immediate from this that $\beta$ fixes the vector 
\begin{equation}\label{equation:v}
v = (t,t^2, \dots, t^n)^T
\end{equation}
(acting by left multiplication), and that also $\beta$ fixes the vector
\begin{equation}\label{equation:1}
\vec 1 = (1, 1, \dots, 1)
\end{equation}
when acting by right multiplication (in later portions of the argument, we will also wish to consider the {\em column} vector consisting of all $1$'s; this will also be denoted $\vec 1$). Taking a $\beta$-invariant summand $W \le \Lambda^n$ complimentary to $\Lambda \pair{v}$, we obtain the {\em reduced Burau representation}
\[
\beta': B_n \to \GL(W) \cong \GL_{n-1}(\Lambda).
\]

One of the fundamental properties of $\beta$ that we exploit is the fact that the pure braid group admits a natural definition in terms of the algebraic structure of $\GL_n(\Lambda)$. 

\begin{lemma}\label{lemma:pure=s}
Let $r: \GL_n(\Lambda) \to \GL_n(\Z)$ denote the homomorphism induced from the ring morphism $\Lambda \to \Z$ obtained by setting $t = 1$. Then $r \circ \beta: B_n \to \GL_n(\Z)$ coincides with the permutation representation of the symmetric group. Consequently there is an equality
\[
PB_n = \ker(r \circ \beta).
\]
\end{lemma}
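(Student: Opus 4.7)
The plan is a direct computation on generators. Substituting $t = 1$ in the defining formula \eqref{equation:betadef} yields
\[
r(\beta(\sigma_i)) = I_{i-1} \oplus \left(\begin{array}{cc} 0 & 1 \\ 1 & 0 \end{array}\right) \oplus I_{n-i-1},
\]
which is precisely the permutation matrix of the transposition $(i,\,i+1) \in S_n$. Since the adjacent transpositions generate $S_n$, and since $\beta(\sigma_1), \dots, \beta(\sigma_{n-1})$ generate $\beta(B_n)$, it follows that $r \circ \beta : B_n \to \GL_n(\Z)$ factors through the standard surjection $B_n \twoheadrightarrow S_n$ (sending $\sigma_i$ to $(i,\,i+1)$) and its image is the subgroup of permutation matrices, confirming that $r \circ \beta$ is the permutation representation.

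The second assertion then follows from the standard definition of $PB_n$ as the kernel of the surjection $B_n \to S_n$: since the map $S_n \to \GL_n(\Z)$ sending a permutation to its associated permutation matrix is injective, we have $\ker(r\circ\beta) = \ker(B_n \to S_n) = PB_n$. There is no substantive obstacle here — the only thing to notice is that the $t=1$ specialization of the $2\times 2$ block in \eqref{equation:betadef} is exactly the $2 \times 2$ swap matrix.
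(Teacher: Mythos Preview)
Your proof is correct and follows essentially the same approach as the paper: both substitute $t=1$ into \eqref{equation:betadef} to see that each generator $\sigma_i$ is sent to the permutation matrix of the transposition $(i\ i+1)$, from which the identification of $r\circ\beta$ with the permutation representation and the equality $PB_n = \ker(r\circ\beta)$ follow immediately.
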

\begin{proof}
Setting $t = 1$ in \eqref{equation:betadef} sends $\sigma_i$ to the permutation representation of the transposition $(i\ i+1)$. Thus setting $t = 1$ for an arbitrary element $x \in B_n$ simply records the associated permutation $\bar x \in S_n$, and the result follows. 
\end{proof}

\subsection{Unitarity}
In the 1984 paper \cite{squier}, Squier made a fundamental contribution to the problem of determining the image of the Burau representation by identifying a certain invariant Hermitian form for $\beta'$. We do not follow Squier's original definition here, as we will need to make use of the unitarity of the {\em unreduced} Burau representation (Squier treated only the reduced case). The unitarity of $\beta'$ is {\em a posteriori} very natural, and follows from the covering-space construction of $\beta'$ as automorphisms on a certain homology group $H_1(\tilde D_n)$ that is equipped with a $\Lambda$-valued intersection pairing. The unitarity of $\beta$ is still surprising from this point of view since the corresponding topological construction involves a {\em relative} homology group for which no such pairing seems to exist. However, one can easily extend Squier's form to a Hermitian form on the unreduced Burau representation by the requirement that the invariant subspace be orthogonal. To avoid the presence of some superfluous roots of $t$ involved in Squier's form, we adopt a different approach. The form $J$ below was obtained directly by trial-and-error (essentially by reverse-engineering the proof of Lemma \ref{lemma:unitary})! 

\begin{definition} [the Hermitian form $J$] \label{definition:J}
$J \in \GL_n(\Lambda)$ is the matrix with entries
\[
(J)_{ij} = \begin{cases}
		1 	& \mbox{ if } i = j\\
		-t	& \mbox{ if } i >j\\
		-t^{-1}& \mbox{ if } i <j.
\end{cases}
\]
\end{definition} 

\begin{lemma}\label{lemma:unitary}
Let $J$ be as in Definition \ref{definition:J}. The unreduced Burau representation $\beta$ is Hermitian with respect to $J$ (relative to the involution $\bar\cdot: \Lambda \to \Lambda$ determined by $\bar t = t^{-1}$).
\end{lemma}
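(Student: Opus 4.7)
The plan is to verify the identity $\overline{\beta(x)}^T J \beta(x) = J$ on the Artin generators $\sigma_1, \ldots, \sigma_{n-1}$ of $B_n$; since the set of $x \in B_n$ satisfying this identity forms a subgroup of $B_n$, this will suffice. Fix $i \in \{1, \ldots, n-1\}$ and partition the index set $\{1, \ldots, n\}$ into $A = \{1, \ldots, i-1\}$, $B = \{i, i+1\}$, $C = \{i+2, \ldots, n\}$. In this partition $\beta(\sigma_i)$ is block-diagonal with blocks $I_{|A|}, M, I_{|C|}$, where
\[
M = \begin{pmatrix} 1-t & 1 \\ t & 0 \end{pmatrix}, \qquad \overline{M}^T = \begin{pmatrix} 1-t^{-1} & t^{-1} \\ 1 & 0 \end{pmatrix}.
\]
Writing $J$ in the induced $3\times 3$ block form with blocks $J_{XY}$ ($X, Y \in \{A, B, C\}$), the identity $\overline{\beta(\sigma_i)}^T J \beta(\sigma_i) = J$ decomposes into the following checks: (i) the central identity $\overline{M}^T J_{BB} M = J_{BB}$; (ii) the $B$-column identities $J_{AB} M = J_{AB}$ and $J_{CB} M = J_{CB}$; (iii) the $B$-row identities $\overline{M}^T J_{BA} = J_{BA}$ and $\overline{M}^T J_{BC} = J_{BC}$. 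The remaining blocks $J_{AA}, J_{AC}, J_{CA}, J_{CC}$ are untouched by the conjugation.

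The central block $J_{BB} = \left(\begin{smallmatrix} 1 & -t^{-1} \\ -t & 1 \end{smallmatrix}\right)$ is verified by a direct $2\times 2$ multiplication. For the off-diagonal blocks, the key structural observation is that every row of $J_{AB}$ has the constant form $(-t^{-1}, -t^{-1})$ and every row of $J_{CB}$ has the constant form $(-t, -t)$, and dually every column of $J_{BA}$ has the constant form $(-t, -t)^T$ while every column of $J_{BC}$ has the constant form $(-t^{-1}, -t^{-1})^T$. So these four identities all reduce to showing that any row of the shape $(a, a)$ is fixed by right-multiplication by $M$ and any column of the shape $(b, b)^T$ is fixed by left-multiplication by $\overline{M}^T$. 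Both follow from the trivial identities $a(1-t) + at = a$ and $b(1-t^{-1}) + bt^{-1} = b$, with the second entry fixed because $M$ and $\overline{M}^T$ each have a $1$ in the top row and a $0$ in the bottom row of the appropriate column.

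No serious obstacle is anticipated: the entire verification is a bookkeeping exercise. What is perhaps noteworthy is that the proof exhibits precisely why the form $J$ was designed as it was — the constant-stripe structure of its off-diagonal blocks is exactly what is needed to absorb the $(1-t, 1)$ and $(t, 0)$ rows of $M$, which is presumably the reverse-engineering the author alludes to in the discussion preceding Definition \ref{definition:J}.
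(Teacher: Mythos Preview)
Your proof is correct and follows the same approach as the paper: reduce to checking $(\beta(\sigma_i))^* J \beta(\sigma_i) = J$ on the Artin generators. The paper simply declares this ``a routine calculation which we omit,'' whereas you have actually carried out the block decomposition and verified each block, so your write-up is strictly more detailed than the original.
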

\begin{proof}
Define the involution $\cdot^*: M_n(\Lambda) \to M_n(\Lambda)$ by
\begin{equation}\label{equation:*}
(A^*)_{ij} = \bar{(A)_{ji}}.
\end{equation}
We must show that 
\[
(\beta(\sigma_i))^* J \beta(\sigma_i) = J
\]
for $1 \le i \le n-1$. This is a routine calculation which we omit.
\end{proof}

\subsection{The target group $\Gamma$}\label{subsection:gamma}
In \eqref{equation:v} we saw that $\im(\beta)$ fixes the vector $v$, and in Lemma \ref{lemma:unitary}, we saw that $\im(\beta)$ preserves the Hermitian form $J$. One is led to wonder how close these conditions come to characterizing the image of $\beta$. There is one final ``obvious'' constraint on the image of $\beta$ that follows from Lemma \ref{lemma:pure=s}: the image of $\beta$ mod $t-1$ must be the permutation representation of $S_n$. The reduction of $J$ mod $t-1$ is a positive-definite Hermitian form, and hence its isometry group is finite, but it is in fact a {\em proper} extension of $S_n$ and so this must be taken into account when characterizing the image of $\beta$. Theorem \ref{theorem:image} asserts that from the point of view of the $s$-adic topology, there are no other constraints on the image of $\beta$.

\begin{definition}[The groups $\Gamma, \Gamma'$]\label{definition:gamma}
Let $v$ be the globally-fixed vector defined in \eqref{equation:v}, let $J$ be the Hermitian form of Definition \ref{definition:J}, and let $(S_n)_{perm}$ denote the embedding of $S_n$ into $\GL_n(\Z)$ as the permutation representation.
Then the group $\Gamma$ is defined as follows:
\[
\Gamma = \{A \in \GL_n(\Lambda) \mid A v= v,\ A^* J A = J,\ A \pmod{t-1} \in (S_n)_{perm}\}.
\]
Let $\Gamma' \leqslant \GL_{n-1}(\Lambda)$ denote the image of $\Gamma$ when restricted to the $J$-orthogonal complement $v^\perp$. \
\end{definition}

\section{The $s$-adic filtration on $\Gamma$ and the Lie algebra $\mathfrak g$}\label{section:sadic}
In this section we begin our study of the algebraic structure of $\Gamma$. Inspired by the work of Lee--Szczarba \cite{LS}, as well as the notion of a ``linear-central filtration'' appearing in the work of Bass--Lubotzky \cite{BL}, we introduce the notions of an ``$s$-adic filtration'' on $\Gamma$ and a trio of closely-related graded $\Z$-Lie algebras $\mathfrak g, \mathfrak g(\Gamma), \mathfrak g(\beta)$; these appear as Definitions \ref{definition:sadic} and \ref{definition:g}, respectively. In Lemma \ref{lemma:gembed} we observe that there are natural containments $\mathfrak g(\beta) \subseteq \mathfrak g(\Gamma) \subseteq \mathfrak g$. Finally in Proposition \ref{proposition:density}, we reduce Theorem \ref{theorem:image} to the problem of showing the equality $\mathfrak g(\Gamma) = \mathfrak g(\beta)$. 

\subsection{The $s$-adic filtration}
Lemma \ref{lemma:pure=s} shows that $PB_n$ admits a characterization in terms of the algebraic structure of $\Gamma$. It also highlights the importance of the reduction setting $t = 1$. The key technical idea of the paper is to push this insight further to probe deeper and deeper into the structure of $B_n$ and $\Gamma$. Set
\begin{equation}\label{equation:s}
s = t-1.
\end{equation}

\begin{definition} [$s$-adic filtration] \label{definition:sadic}
The {\em $s$-adic filtration} on $\Gamma$ is the decreasing filtration $\{\Gamma[s^k]\}_{k \ge 0}$ where $\Gamma[s^k]$ is defined as the kernel of the reduction map 
\[
\Gamma \to \GL_n(\Lambda / \pair{s^k}).
\]
We also define the $s$-adic filtration on $B_n$ by pullback:
\[
B_n[s^k]:= B_n \cap \beta^{-1}(\Gamma[s^k]).
\]
\end{definition} 

\begin{lemma}\label{lemma:exhaustive}
The $s$-adic filtration on $\Gamma$ is exhaustive:
\[
\bigcap_{k \ge 1} \Gamma[s^k] = \{I\}
\]
\end{lemma}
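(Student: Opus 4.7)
The plan is to unwind the definition and reduce the lemma to a purely ring-theoretic statement about $\Lambda$. By Definition \ref{definition:sadic}, an element $A \in \bigcap_{k \ge 1} \Gamma[s^k]$ reduces to the identity in $\GL_n(\Lambda/\pair{s^k})$ for every $k$. Equivalently, every entry of the matrix $A - I \in M_n(\Lambda)$ belongs to the intersection $\bigcap_{k \ge 1} \pair{s^k}$ of ideals in $\Lambda$. So the lemma reduces to showing
\[
\bigcap_{k \ge 1} \pair{s^k} = \{0\} \qquad \text{in } \Lambda = \Z[t, t^{-1}].
\]

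For this ring-theoretic fact I would argue by degree. Any nonzero $f \in \Lambda$ can be written uniquely as $f = t^{-N} g$ for some integer $N \ge 0$ and some polynomial $g \in \Z[t]$ with $g(0) \ne 0$ and $d \coloneq \deg g < \infty$. Suppose $s^k \mid f$ in $\Lambda$. Since $t$ is a unit in $\Lambda$, we also have $s^k \mid g$ in $\Lambda$; and since $t$ and $s = t-1$ are coprime in $\Z[t]$ (as $t - s = 1$), this divisibility descends to $s^k \mid g$ in $\Z[t]$. For $k > d$ this would force $g = 0$, contradicting $f \ne 0$. Hence no nonzero $f$ lies in all the $\pair{s^k}$.

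Putting the two steps together: each entry of $A - I$ must vanish, so $A = I$, as required.

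There is no real obstacle here; the content is simply the elementary degree argument showing that $\Lambda$ is separated in the $(s)$-adic topology. (One could alternatively invoke the Krull intersection theorem applied to the Noetherian ring $\Lambda$ and the proper ideal $(s)$, after localizing at the maximal ideal $(s, p)$ for some prime $p$ to ensure the hypotheses of Krull apply, but the direct degree argument is both cleaner and more elementary.)
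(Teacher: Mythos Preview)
Your proof is correct and follows essentially the same approach as the paper: reduce to showing each entry of $A-I$ lies in $\bigcap_{k\ge 1}\langle s^k\rangle$, and then observe this intersection is $\{0\}$. The paper simply asserts the latter fact without proof, whereas you supply a clean degree argument; otherwise the two arguments are identical.
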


\begin{proof}
For $A \neq I \in \Gamma$, consider the matrix $A - I$. By definition, $A \in \Gamma[s^k]$ if and only if $s^k$ divides each entry of $A-I$. Only $0 \in\Lambda$ is divisible by $s^k$ for all $k \ge 0$, from which the result follows.
\end{proof}

\begin{remark}
As pointed out to us by Alex Suciu, for the choice $s = t-1$, the $s$-adic filtration on $\Lambda$ coincides with the filtration on $\Lambda$ by powers of the augmentation ideal (viewing $\Lambda$ as the group ring of $\Z$). We do not emphasize this point of view further here, since we believe this to be largely coincidental. In \cite{kordeksalter}, we will study a different ``$s'$-adic filtration'' on $\Gamma$ taking $s' = t+1$.
\end{remark}

We next develop the crucial idea of ``$s$-adic expansion''.
\begin{definition}[$s$-adic expansion]\label{definition:expansion}
Let $A \in \Gamma$ be given. The {\em $s$-adic expansion} of $A$ is the formal power series in $M_n(\Z)[[s]]$
\[
A = (A)_{(0)} + s (A)_{(1)} + s^2 (A)_{(2)} + \dots,
\]
with $(A)_{(0)} = A \pmod s$ and each $(A)_{(k)}$ for $k \ge 1$ defined recursively by the formula
\[
(A)_{(k)} = (A - ((A)_{(0)} + \dots + s^{k-1}(A)_{(k-1)}))/(s^k) \pmod s.
\]
For $k \ge 0$, the {\em truncated $s$-adic expansion} $(A)_{(\le k)}$ is the finite sum
\[
\sum_{i = 0}^k s^i (A)_{(i)} \in M_n(\Lambda).
\]
\end{definition}
We will also make use of the $s$-adic expansion of vectors $v \in \Lambda^n$ using the same notation. The following lemma is immediate.
\begin{lemma}\label{lemma:reduction}
For any $k \ge 0$ and any $A \in \Gamma$, the image of $A$ in $\GL_n(\Lambda/\pair{s^{k+1}})$ is given by the truncated $s$-adic expansion $(A)_{(\le k)}$. 
\end{lemma}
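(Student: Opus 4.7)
My plan is to prove this by straightforward induction on $k$, treating the statement as essentially a formal consequence of the recursive definition of the $s$-adic expansion (Definition \ref{definition:expansion}).

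The base case $k = 0$ is immediate from the definition: $(A)_{(\le 0)} = (A)_{(0)} = A \pmod{\pair{s}}$.

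For the inductive step, I would assume $A \equiv (A)_{(\le k-1)} \pmod{\pair{s^k}}$, so that $A - (A)_{(\le k-1)} = s^k C$ for some $C \in M_n(\Lambda)$. The recursive formula in Definition \ref{definition:expansion} identifies $(A)_{(k)} = C \pmod{\pair{s}}$, hence $C = (A)_{(k)} + s D$ for some $D \in M_n(\Lambda)$. Substituting yields
\[
A - (A)_{(\le k-1)} = s^k (A)_{(k)} + s^{k+1} D,
\]
which rearranges to $A \equiv (A)_{(\le k)} \pmod{\pair{s^{k+1}}}$, completing the induction.

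The one potentially subtle point — and this is the only place I expect to need to be careful — is confirming that the division by $s^k$ actually returns a matrix with entries in $\Lambda$ (rather than forcing us into a larger ring such as $\Z[[s]]$). This follows from the fact that $s = t-1$ is monic in $t$: any $x \in \Lambda$ lying in $\pair{s^k}$ can be written as $t^m \tilde x$ with $m \in \Z$ and $\tilde x \in \Z[t]$, and monic division by $s^k$ in $\Z[t]$ produces an integer-coefficient quotient, which we may then re-multiply by $t^m$ to land back in $\Lambda$. With this point dispatched, there is no further obstruction; the entire argument should occupy only a few lines, consistent with the author's remark that the lemma is immediate.
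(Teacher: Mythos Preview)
Your proof is correct and is precisely the unpacking of the recursive definition that the paper has in mind; the paper itself declares the lemma ``immediate'' and offers no proof at all. Your extra care about divisibility by $s^k$ landing in $\Lambda$ is fine but slightly over-engineered: once the inductive hypothesis gives $A - (A)_{(\le k-1)} \in \langle s^k\rangle$, divisibility by $s^k$ in the integral domain $\Lambda$ is automatic from the definition of the ideal, with no need to invoke monicity.
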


The following lemma will be employed in the proof of Lemma \ref{lemma:bracketgrading} below.
\begin{lemma}\label{lemma:sadicinverse}
For $k \ge 1$ and $X \in \Gamma[s^k]$ arbitrary, the truncated  $s$-adic expansion of $X^{-1}$ is given as follows:
\[
(X^{-1})_{(\le 2k-1)} = I - s^k(X)_{(k)} - s^{k+1} (X)_{(k+1)} - \dots - s^{2k-1} (X)_{(2k-1)}.
\]
\end{lemma}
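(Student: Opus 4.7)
The plan is to prove this via a straightforward geometric series argument. Since $X \in \Gamma[s^k]$, Lemma \ref{lemma:reduction} implies $X \equiv I \pmod{s^k}$, so $(X)_{(0)} = I$ and $(X)_{(i)} = 0$ for $1 \le i \le k-1$. Thus in the ring $M_n(\Z)[[s]]$ we may write $X = I + Y$, where
\[
Y = \sum_{i \ge k} s^i (X)_{(i)}
\]
lies in $s^k M_n(\Z)[[s]]$; here we embed $\Lambda \hookrightarrow \Z[[s]]$ via $t \mapsto 1+s$ and $t^{-1} \mapsto \sum_{i \ge 0} (-s)^i$, which is a ring homomorphism.

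Next, I would apply the formal identity $(I+Y)^{-1} = \sum_{m \ge 0} (-Y)^m$, valid in $M_n(\Z[[s]])$ because $Y$ has positive $s$-adic valuation. The key observation is that for $m \ge 2$ the term $(-Y)^m$ has $s$-adic valuation at least $mk \ge 2k$, and therefore
\[
X^{-1} \equiv I - Y \pmod{s^{2k}}.
\]
Reading off the coefficients of $s^0, s^1, \dots, s^{2k-1}$ yields exactly the claimed formula for $(X^{-1})_{(\le 2k-1)}$.

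The only point requiring a bit of care, though not a real obstacle, is the identification of the formal power series inverse $(I+Y)^{-1} \in M_n(\Z[[s]])$ with the $s$-adic expansion of the genuine inverse $X^{-1} \in \Gamma \subset \GL_n(\Lambda)$. This follows from uniqueness of inverses: the embedding $\Lambda \hookrightarrow \Z[[s]]$ carries $X \cdot X^{-1} = I$ to $(I+Y) \cdot (\text{image of } X^{-1}) = I$, so the image of $X^{-1}$ must coincide with $\sum_{m \ge 0}(-Y)^m$ in $M_n(\Z[[s]])$. The $s$-adic coefficients of $X^{-1}$ are by definition the coefficients of this power series, completing the argument.
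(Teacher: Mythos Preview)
Your proof is correct and follows essentially the same approach as the paper: both exploit the fact that since $X - I$ has $s$-adic valuation at least $k$, any quadratic or higher correction to the inverse lands in valuation at least $2k$. The paper phrases this via the Cauchy product formula $(XX^{-1})_{(j)} = \sum_i (X)_{(i)}(X^{-1})_{(j-i)}$ and an easy induction on $j$, whereas you package the same computation as a truncated geometric series $(I+Y)^{-1} \equiv I - Y \pmod{s^{2k}}$; the content is identical.
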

\begin{proof}
Let $A, B \in \Gamma$ be arbitrary. The $j^{th}$ term of the $s$-adic expansion of the product $AB$ is given as follows:
\begin{equation}\label{equation:prodformula}
(AB)_{(j)} = \sum_{i = 0}^j (A)_{(i)} (B)_{(j-i)}.
\end{equation}
For $A = X, B= X^{-1}$, the terms $(X X^{-1})_{(j)} = (I)_{(j)}$ vanish for $j \ge 1$. By hypothesis, $(X)_{(j)} = 0$ for $1 \le j < k$, and the same holds for $X^{-1}$ since $X^{-1} \in \Gamma[s^k]$ as well. Examining \eqref{equation:prodformula} for $j = k$ shows that $(X^{-1})_{(k)} = -(X)_{(k)}$, and one proceeds like this, inductively showing that the result holds for all coefficients $(X^{-1})_{(j)}$ for $j < 2k$.
\end{proof}

In Section \ref{subsection:lie}, we will see how the $s$-adic filtration on $\Gamma$ gives rise to some closely associated graded $\Z$-Lie algebras. In preparation for this construction, we establish the following two lemmas.

\begin{lemma}\label{lemma:bracketgrading}
For $1 \le k \le \ell$, the commutator is compatible with the $s$-adic filtration: given $X \in \Gamma[s^k]$ and $Y \in \Gamma[s^\ell]$,
\[
[X,Y] \in \Gamma[s^{k + \ell}].
\]
Moreover,
\begin{equation}\label{equation:commbrak}
([X,Y])_{(k + \ell)} = (X)_{(k)}(Y)_{(\ell)} - (Y)_{(\ell)}(X)_{(k)} := \pair{(X)_{(k)},(Y)_{(\ell)}} .
\end{equation}
\end{lemma}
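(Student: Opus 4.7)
The plan is to compute the $s$-adic expansion of $[X,Y] = XYX^{-1}Y^{-1}$ directly, using only the (elementary) fact that $\Gamma[s^k]$ is a subgroup rather than invoking the precise inversion formula of Lemma \ref{lemma:sadicinverse}.

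First I would set $\xi = X - I$ and $\eta = Y - I$, so that $\xi$ has entries divisible by $s^k$ with $\xi/s^k \equiv (X)_{(k)} \pmod{s}$, and likewise $\eta$ has entries divisible by $s^\ell$ with $\eta/s^\ell \equiv (Y)_{(\ell)} \pmod{s}$. A one-line expansion then yields the identity
$$XY - YX = \xi\eta - \eta\xi,$$
from which two observations follow immediately: (i) all entries of $XY - YX$ are divisible by $s^{k+\ell}$, and (ii) the leading coefficient satisfies $(XY - YX)/s^{k+\ell} \equiv \pair{(X)_{(k)}, (Y)_{(\ell)}} \pmod{s}$.

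Next I would rewrite the commutator via the elementary rearrangement $[X,Y] \cdot YX = XY$, which gives
$$[X,Y] - I = (XY - YX)(YX)^{-1}.$$
Since both $X$ and $Y$ lie in $\Gamma[s^k]$ (using $k \le \ell$) and $\Gamma[s^k]$ is a subgroup, we have $YX \in \Gamma[s^k]$ and therefore $(YX)^{-1} - I$ has entries divisible by $s^k$. Since $XY - YX$ is already divisible by $s^{k+\ell}$, the error term $(XY - YX)\bigl((YX)^{-1} - I\bigr)$ is divisible by $s^{2k + \ell}$, which is at least $s^{k+\ell+1}$ because $k \ge 1$. Consequently
$$[X,Y] - I \equiv XY - YX \equiv s^{k+\ell} \pair{(X)_{(k)}, (Y)_{(\ell)}} \pmod{s^{k+\ell+1}},$$
which simultaneously establishes $[X,Y] \in \Gamma[s^{k+\ell}]$ and the asserted formula for $([X,Y])_{(k+\ell)}$.

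The principal concern is that a naive expansion of $XYX^{-1}Y^{-1}$ risks introducing spurious contributions at degree $k+\ell$ from the $X^{-1}$ and $Y^{-1}$ factors, which would force one to track cancellations using the inversion formula. The factorization $[X,Y] - I = (XY - YX)(YX)^{-1}$ is the key move, as it cleanly isolates the entire commutator content into $XY - YX$ and leaves the inversion factor as a manifestly higher-order correction, thereby avoiding any delicate cancellation.
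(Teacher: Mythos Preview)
Your proof is correct and takes a genuinely different route from the paper. The paper expands $XYX^{-1}Y^{-1}$ term-by-term as a power series in $s$ (obtaining the intermediate expression \eqref{equation:XYXY}), and then invokes the inversion formula of Lemma~\ref{lemma:sadicinverse} to see that the terms in degrees $\ell,\ell+1,\dots,k+\ell-1$ cancel pairwise. Your argument instead isolates all of the commutator content in the single identity $[X,Y]-I = (XY-YX)(YX)^{-1}$, after which $XY-YX = \xi\eta-\eta\xi$ is visibly of order $s^{k+\ell}$ and the factor $(YX)^{-1}$ only perturbs by $s^{2k+\ell}$. This is cleaner for the lemma at hand and avoids Lemma~\ref{lemma:sadicinverse} altogether. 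On the other hand, the paper's expansion \eqref{equation:XYXY} is not wasted effort: it is reused verbatim in the proof of Lemma~\ref{lemma:2k+1}, where one needs the next coefficient $([X,Y])_{(k+\ell+1)}$ rather than just the leading one, and for that purpose your factorization is less convenient since the correction $(XY-YX)\bigl((YX)^{-1}-I\bigr)$ would then have to be unpacked anyway.
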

\begin{proof}
Write
\[
XY = X(I + s^\ell (Y)_{(\ell)} + \dots) = X + s^\ell X (Y)_{(\ell)} + \dots
\]
and likewise,
\[
X^{-1}Y^{-1} = X^{-1}(I + s^\ell (Y^{-1})_{(\ell)}+\dots) = X^{-1} + s^\ell X^{-1}(Y^{-1})_{(\ell)} + \dots
\]
Multiplying,
\begin{equation}\label{equation:XYXY}
(XY)(X^{-1}Y^{-1})  = I + s^\ell (X (Y)_{(\ell)} X^{-1} + (Y^{-1})_{(\ell)}) + \dots
\end{equation}
For $j \ge \ell$ arbitrary, the expression $s^j(X (Y)_{(j)}X^{-1})$ has terms in degree $j, j+k, j+k+1, \dots$, and the degree-$j$ term is given by $s^j (Y)_{(j)}$. Collecting terms of like degree in \eqref{equation:XYXY}, we see that for $\ell \le j \le k + \ell -1$, the $s^j$ term is given by
\[
(Y)_{(j)} + (Y^{-1})_{(j)}.
\]
As we are assuming $\ell \ge k$, it follows from Lemma \ref{lemma:sadicinverse} that in this range, $(Y^{-1})_{(j)} = -(Y)_{(j)}$. Consequently the $s^j$-term of the $s$-adic expansion of $[X,Y]$ vanishes for $1 \le j \le k + \ell - 1$, so that $[X,Y] \in \Gamma[s^{k+\ell}]$ as claimed. The computation of $([X,Y])_{(k + \ell)}$ follows directly by the same analysis.
\end{proof}

\begin{remark}[The $s$-adic filtration is linear-central]\label{remark:LC}
In \cite{BL}, Bass--Lubotzky define a ``linear-central filtration'' on a group $\Gamma$ to be a filtration $\{\Gamma_i\}_{i \ge 0}$ such that (a) $\Gamma_0 / \Gamma_1$ is a linear group and (b) $[\Gamma_i, \Gamma_j] \le \Gamma_{i+j}$. We observe here that the $s$-adic filtration is linear-central: (a) follows from Lemma \ref{lemma:pure=s}, and (b) was established immediately above in Lemma \ref{lemma:bracketgrading}. 
\end{remark}

\begin{lemma}\label{lemma:abquotient}
For $k \ge 1$, the quotient $\Gamma[s^k]/\Gamma[s^{2k}]$ is abelian; {\em a fortiori} the quotient $\Gamma[s^k]/\Gamma[s^{k+1}]$ is abelian as well. The homomorphism $(\cdot)_{(k)}: \Gamma[s^k] \to M_n(\Z)$ descends to an embedding of $\Gamma[s^k]/\Gamma[s^{k+1}]$ into $M_n(\Z)$.
\end{lemma}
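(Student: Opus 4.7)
The plan is to derive both assertions directly from the commutator estimate of Lemma \ref{lemma:bracketgrading} together with the product formula \eqref{equation:prodformula} for $s$-adic expansions.

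For the first statement, I would note that for any $X, Y \in \Gamma[s^k]$, Lemma \ref{lemma:bracketgrading} (applied with $\ell = k$) gives $[X,Y] \in \Gamma[s^{2k}]$. Since the entire commutator subgroup of $\Gamma[s^k]$ is therefore contained in $\Gamma[s^{2k}]$, the quotient $\Gamma[s^k]/\Gamma[s^{2k}]$ is abelian. The ``a fortiori'' claim is immediate from the inclusion $\Gamma[s^{2k}] \le \Gamma[s^{k+1}]$ (valid since $2k \ge k+1$ for $k \ge 1$), so $\Gamma[s^k]/\Gamma[s^{k+1}]$ is a further quotient of an abelian group.

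For the second statement, I would first verify that $(\cdot)_{(k)}$ restricted to $\Gamma[s^k]$ is a group homomorphism into the additive group $M_n(\Z)$. Given $X, Y \in \Gamma[s^k]$, every intermediate term $(X)_{(i)}$ and $(Y)_{(i)}$ vanishes for $1 \le i \le k-1$, and $(X)_{(0)} = (Y)_{(0)} = I$. Applying \eqref{equation:prodformula} at degree $k$ therefore collapses to
\[
(XY)_{(k)} = (X)_{(0)}(Y)_{(k)} + (X)_{(k)}(Y)_{(0)} = (X)_{(k)} + (Y)_{(k)},
\]
showing that $(\cdot)_{(k)}$ is additive.

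Finally I would identify the kernel. For $X \in \Gamma[s^k]$, the condition $(X)_{(k)} = 0$ combined with the vanishing of $(X)_{(i)}$ for $1 \le i \le k-1$ forces $X \equiv I \pmod{s^{k+1}}$, i.e.\ $X \in \Gamma[s^{k+1}]$; conversely every element of $\Gamma[s^{k+1}]$ obviously has trivial $k$-th coefficient. Hence $(\cdot)_{(k)}$ descends to an injective homomorphism $\Gamma[s^k]/\Gamma[s^{k+1}] \hookrightarrow M_n(\Z)$, as required. No step here looks like a serious obstacle: the whole argument is bookkeeping with the product formula and the commutator estimate already in hand.
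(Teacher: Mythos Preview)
Your argument is correct and follows essentially the same route as the paper: both deduce abelianness from the commutator estimate $[\Gamma[s^k],\Gamma[s^k]]\le\Gamma[s^{2k}]\le\Gamma[s^{k+1}]$ of Lemma~\ref{lemma:bracketgrading}, and both identify the kernel of $(\cdot)_{(k)}$ directly from the definition of the $s$-adic filtration. Your proof is slightly more explicit in that you verify via the product formula~\eqref{equation:prodformula} that $(\cdot)_{(k)}$ is additive on $\Gamma[s^k]$, a point the paper leaves implicit in calling it a ``homomorphism'' in the statement.
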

\begin{proof}
The first claim follows immediately from Lemma \ref{lemma:bracketgrading}: 
\[
[\Gamma[s^k],\Gamma[s^k]] \le \Gamma[s^{2k}] \le \Gamma[s^{k+1}].
\]
For the second claim, if $A \in \Gamma[s^k]$ with $(A)_{(k)} = 0$, then $A \in \Gamma[s^{k+1}]$ by definition, from which the second claim follows. 
\end{proof}

\subsection{Some $\Z$-Lie algebras}\label{subsection:lie}
Groups equipped with a commutator-compatible filtration as in Lemma \ref{lemma:bracketgrading} have a naturally-associated graded $\Z$-Lie algebra. Here we define this Lie algebra for the $s$-adic filtration on $\Gamma$ and establish some first properties as summarized in Lemma \ref{lemma:gembed}. From there we turn to Proposition \ref{proposition:density}, which gives a reformulation of Theorem \ref{theorem:image} in the setting of $\Z$-Lie algebras. 

\begin{definition}[the Lie algebras $\mathfrak g, \mathfrak g(\Gamma), \mathfrak g(\beta)$]\label{definition:g}\ 
\begin{enumerate}
\item $\mathfrak g$ is the graded $\Z$-Lie algebra 
\[
\mathfrak g := \bigoplus_{k \ge 1} G_k,
\]
with $G_k$ defined as follows:
\[
G_k = \begin{cases}
		\{M \in M_n(\Z) \mid M\vec1 = 0,\  M^T = M\}			& k = 1\\
		\{M \in M_n(\Z) \mid M\vec 1 = 0,\ M^T = -M\}			& k \ge 2 \mbox{ even}\\
		\{M \in M_n(\Z) \mid M\vec 1 = 0,\ M^T = M,\ \tr(M) = 0\} 	& k \ge 3 \mbox{ odd}
\end{cases}
\]
The Lie bracket on $\mathfrak g$ is induced from the usual bracket $\pair{A,B} = AB- BA$ on $M_n(\Z)$. 

\item $\mathfrak g(\Gamma)$ is the graded $\Z$-Lie algebra
\[
\mathfrak g(\Gamma) := \bigoplus_{k \ge 1} \Gamma[s^k]/\Gamma[s^{k+1}]
\]
with Lie bracket induced by the commutator $[\cdot, \cdot]: \Gamma[s^k] \times \Gamma[s^{\ell}] \to \Gamma[s^{k+\ell}]$ (that is this in fact a Lie bracket follows from Lemma \ref{lemma:bracketgrading}).

\item $\mathfrak g(\beta)$ is the subalgebra of $\mathfrak g(\Gamma)$ determined by the image of $\beta$:
\[
(\mathfrak g(\beta))_k := \beta(B_n \cap \Gamma[s^k]) \pmod{\Gamma[s^{k+1}]}.
\]

\end{enumerate}
\end{definition}

Of these Lie algebras, $\mathfrak g(\Gamma)$ is the one ``naturally associated'' to the $s$-adic filtration on $\Gamma$. Ultimately we will see that these are all isomorphic, but this is essentially equivalent to Theorem \ref{theorem:image}; see Proposition \ref{proposition:density}. As a first step in this direction, we see that $\mathfrak g(\Gamma)$ embeds in $\mathfrak g$.

\begin{lemma}\label{lemma:gembed}
For $k \ge 1$, each quotient $\Gamma[s^k]/\Gamma[s^{k+1}]$ is isomorphic to a subgroup of $G_k$, and the induced embedding of graded abelian groups $\mathfrak g(\Gamma) \subseteq \mathfrak g$ is in fact an embedding of $\Z$-Lie algebras.
\end{lemma}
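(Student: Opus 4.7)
By Lemma \ref{lemma:abquotient}, the map $A\mapsto(A)_{(k)}$ is already an injective homomorphism $\Gamma[s^k]/\Gamma[s^{k+1}]\hookrightarrow M_n(\Z)$; my task is to show its image lies in $G_k$ and then that, assembling over all $k$, the resulting map $\mathfrak g(\Gamma)\hookrightarrow\mathfrak g$ respects the Lie bracket. The bracket compatibility is essentially free: equation \eqref{equation:commbrak} of Lemma \ref{lemma:bracketgrading} states that the $(k+\ell)$-coefficient of the group commutator $[X,Y]$ equals the matrix commutator $\pair{(X)_{(k)},(Y)_{(\ell)}}$, which is exactly the Lie bracket on $\mathfrak g$. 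So all the real work is in the containment claim.

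Fix $A\in\Gamma[s^k]$ and set $M = (A)_{(k)}$. I would verify the three defining properties of $G_k$ in turn. For $M\vec{1}=0$: since $v = \vec{1} + O(s)$ and $A - I = s^k M + O(s^{k+1})$, the condition $Av=v$ reads $s^kM\vec{1}+O(s^{k+1})=0$ and forces $M\vec{1}=0$ immediately. For the $(\pm)$-symmetry $M^T = (-1)^{k+1}M$: substitute $A=I+s^kM+O(s^{k+1})$ and $A^* = I + \bar s^k M^T + O(s^{k+1})$ into $A^*JA = J$, use the identities $\bar s^k \equiv (-1)^k s^k \pmod{s^{k+1}}$ and the fact that the reduction $J_0$ of $J$ modulo $s$ equals $2I - \vec{1}\vec{1}^T$, and extract the $s^k$-coefficient to arrive at $J_0 M + (-1)^k M^T J_0 = 0$. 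Left-multiplying this by $\vec{1}^T$ and using $M\vec{1}=0$ gives $(2-n)\vec{1}^T M = 0$, so $M^T\vec{1}=0$ provided $n\ge 3$. With both $M\vec{1}=0$ and $M^T\vec{1}=0$ in hand, the rank-one contributions from $\vec{1}\vec{1}^T$ collapse and the identity reduces to $2M+2(-1)^k M^T = 0$, giving the claimed symmetry.

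For the trace condition $\tr(M)=0$, required when $k \ge 3$ is odd, I would invoke a separate determinant argument. Taking $\det$ of $A^*JA=J$ gives $\det(A)\,\overline{\det(A)}=1$, so $\det(A)$ is a norm-one unit of $\Lambda$, hence $\det(A) = \pm t^m$ for some $m\in\Z$. But $A\equiv I\pmod{s^k}$ forces $\det(A)\equiv 1\pmod{s^k}$, and inspection of $\pm t^m = \pm(1+ms+\binom{m}{2}s^2+\cdots)$ pins down $\det(A)=1$ exactly whenever $k\ge 2$. On the other hand, expanding $\det(I+s^kM+O(s^{k+1}))$ yields $1 + s^k\tr(M) + O(s^{k+1})$, since the nonlinear corrections first appear at order $s^{2k}\ge s^{k+2}$. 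Comparing these gives $\tr(M)=0$. For even $k\ge 2$ this is already redundant with antisymmetry, and for $k=1$ the unit $t^m$ is unconstrained, consistent with the absence of a trace condition in $G_1$.

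I expect the symmetry step to be the main subtlety: one would like to ``cancel'' $J_0$ in the identity $J_0 M + (-1)^k M^T J_0 = 0$, but $J_0 = 2I - \vec{1}\vec{1}^T$ is not invertible over $\Z$, so the argument has to route through the auxiliary orthogonality $M^T\vec{1}=0$ and the explicit rank-one decomposition of $J_0$.
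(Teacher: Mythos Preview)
Your proposal is correct and follows the same overall architecture as the paper: establish the three defining conditions of $G_k$ separately (kernel condition from the fixed vector, symmetry from unitarity, trace from the determinant), and read off the bracket compatibility from \eqref{equation:commbrak}.

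There are two minor but genuine differences worth noting. For the symmetry step, the paper obtains $\vec 1\,(A)_{(k)}=0$ by invoking the row-vector invariance $\vec 1\,A=\vec 1$ (equation \eqref{equation:1}); strictly speaking this was only stated for $\im(\beta)$, and one must check it holds for all of $\Gamma$ (it does, since $Av=v$ and $A^*JA=J$ force $A^*(Jv)=Jv$, and $Jv$ is a nonzero scalar multiple of $\vec 1$ for $n\ge 3$). Your route is more self-contained: you extract $\vec 1^T M=0$ directly from the equation $J_0M+(-1)^kM^TJ_0=0$ by left-multiplying by $\vec 1^T$, at the harmless cost of assuming $n\ne 2$. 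For the trace step, the paper passes through the characteristic polynomial $\chi_{A-I}$ to pin down $\det(A)=1$ and then $\tr(A')\equiv 0\pmod s$; your argument via $\det(I+s^kM+O(s^{k+1}))=1+s^k\tr(M)+O(s^{k+1})$ is more direct and reaches the same conclusion with less bookkeeping. Neither difference changes the substance of the proof.
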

\begin{proof}
Lemma \ref{lemma:abquotient} identifies $\Gamma[s^k]/\Gamma[s^{k+1}]$ with a subgroup of $M_n(\Z)$. We must show that the conditions characterizing $G_k$ in Definition \ref{definition:g} hold for any $A \in \Gamma[s^k]/\Gamma[s^{k+1}]$. We must furthermore show that under the embedding $\Gamma[s^k]/\Gamma[s^{k+1}] \le M_n(\Z)$, the commutator in $\Gamma$ is represented by the matrix bracket. Each of these will require a separate argument. The bracket has already been established in \eqref{equation:commbrak} in Lemma \ref{lemma:bracketgrading}; the remaining three points are listed below as Lemmas \ref{lemma:step1}-\ref{lemma:step3}. 

\begin{lemma}\label{lemma:step1}
For $k \ge 1$, the image of $A \in \Gamma[s^k]$ in $M_n(\Z)$ satisfies $A \vec 1 = 0$.
\end{lemma}
\begin{proof}
Recall the globally-fixed vector $v \in \Lambda^n$ of \eqref{equation:v}. Since $v$ is fixed, for any $A \in \Gamma[s^k]$, 
\[
(A-I)v = 0.
\]
The $s$-adic expansion of $v = (t, t^2, \dots, t^n)^T$ is given by
\[
v = \vec 1 + s (v)_{(1)} + \dots
\]
Taking the $s$-adic expansion of the product $(A-I)v = 0$, we find that the $s^k$ term is given by the product $(A)_{(k)} \vec 1$, from which the claim follows. 
\end{proof}

\begin{lemma}\label{lemma:step2}
For $k \ge 1$, the image of $A \in \Gamma[s^k]$ in $M_n(\Z)$ satisfies $A^T = (-1)^{k+1} A$.
\end{lemma}
\begin{proof}
The (skew-)symmetry condition will follow from the Hermitian structure on $\Gamma$ given by $J$. We will examine the $s^k$ term in the $s$-adic expansion for $J = A^* J A$. To do this, we must compare the $s$-adic expansions of $A$ and $A^*$. As 
\[
s^* = (t-1)^* = (t^{-1}-1) = \frac{-s}{1+s},
\]
we see that $(A^*)_{(k)} = (-1)^k (A)_{(k)}^T$. Extracting the $s^k$ term in the expansion for $(A^* J A - J) = 0$, we find
\[
(J)_{(0)} (A)_{(k)} + (-1)^k (A)_{(k)}^T (J)_{(0)} = 0.
\]
Let $\bold 1$ denote the $n \times n$ matrix with every entry equal to $1$. Setting $t = 1$ in Definition \ref{definition:J}, 
\begin{equation}\label{equation:J0}
(J)_{(0)} = 2 I - \bold 1.
\end{equation} 
As $\vec 1 A = \vec 1$ by construction, a look at the $s$-adic expansions shows that $\vec 1 (A)_{(j)} = 0$ for all $j \ge 1$. Thus 
\[
(J)_{(0)} (A)_{(k)} + (-1)^k (A)_{(k)}^T (J)_{(0)} = 2 ((A)_{(k)} + (-1)^k (A)_{(k)}^T) = 0,
\]
from which the (skew-) symmetry follows.
\end{proof}

\begin{lemma}\label{lemma:step3}
For $k \ge 2$, the image of $A \in \Gamma[s^k]$ in $M_n(\Z)$ satisfies $\tr((A)_{(k)}) = 0$.
\end{lemma}
\begin{proof}
Following Lemma \ref{lemma:step2}, this property holds automatically for $k$ even, but our argument will not need to specialize to $k$ odd. To establish the claim, we will examine the characteristic polynomial of $A-I$. Evaluating,
\[
\chi_{A-I}(-1) = \chi_A(0) = (-1)^n \det(A) = \pm t^a
\]
for some $a \in \Z$, since $A \in \GL_n(\Lambda)$ and $\Lambda^\times  = \{\pm t^a\mid a \in \Z\}$. By assumption, $A-I = s^k A'$ for some $A' \in M_n(\Lambda)$. The claim $\tr((A)_{(k)}) = 0$ will follow if we can show that $\tr(A')$ is divisible by $s$; this is what we will prove.

Computing the characteristic polynomial,
\[
\chi_{A-I}(x) := \det(xI - s^kA') = s^{nk} \chi_{A'}(x/(s^k)).
\]
It follows that for $0 \le j \le n$, the coefficient on $x^j$ is divisible by $s^{(n-j)k}$. We can therefore write
\begin{equation}\label{equation:charpoly}
\chi_{A-I}(-1) = \pm t^a = (-1)^n (1- \tr(A') s^k + s^{2k} \nu)
\end{equation}
for some $\nu \in \Lambda$. Reducing this expression mod $s$, we find
\[
\pm 1 = (-1)^n, 
\]
which shows that $\det(A) = (-1)^n t^a$ for some $a \in \Z$. Returning to \eqref{equation:charpoly}, we find
\[
t^a-1 = s (1 + t + \dots + t^{a-1})= -s^k( \tr(A') + s^{k} \nu)
\]
if $a > 0$ and a similar expression with $t^{-1}$ in place of $t$ if $a <0$. Dividing by $s$ and then reducing mod $s$ we reach a contradiction (as $k \ge 2$). We conclude that \eqref{equation:charpoly} can only hold for $a = 0$.

To summarize, we have established that $\det(A) = 1$. Returning to \eqref{equation:charpoly} one last time, we see that
\[
s^k \tr(A') = s^{2k} \nu,
\]
so that $\tr(A')$ is divisible by $s^k$ as desired. 
\end{proof}
This concludes the proof of Lemma \ref{lemma:gembed}.
\end{proof}

\begin{remark}\label{remark:whyreduced}
Our choice to work with the {\em unreduced} Burau representation is largely motivated by Lemma \ref{lemma:gembed}. There are corresponding $\Z$-Lie algebras $\mathfrak g', \mathfrak g'(\Gamma')$ attached to the codomain of the reduced Burau representation, but the corresponding groups $G_k'$ are defined by much less natural equations than the (skew-)symmetry appearing in Lemma \ref{lemma:step2}.
\end{remark}

\para{Density via Lie algebras} Our final objective in this section is to reformulate the density result claimed in Theorem \ref{theorem:image} in terms of the $\Z$-Lie algebras $\mathfrak g(\Gamma)$ and $\mathfrak g(\beta)$. 

\begin{proposition}\label{proposition:density}
Suppose there is an equality
\[
\mathfrak g(\Gamma) = \mathfrak g(\beta).
\]
Then $\im(\beta)$ is dense in $\Gamma$ in the $s$-adic topology.
\end{proposition}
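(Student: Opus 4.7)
The plan is a Hensel-style successive approximation. Unwinding definitions, density of $\im(\beta)$ in $\Gamma$ in the $s$-adic topology asserts that for every $\gamma \in \Gamma$ and every $k \ge 1$, there exists $b_k \in B_n$ with $\gamma \beta(b_k)^{-1} \in \Gamma[s^k]$; equivalently, $\beta(b_k) \equiv \gamma \pmod{s^k}$. This is because the cosets $\gamma \Gamma[s^k]$ form a neighborhood basis of $\gamma$ in the $s$-adic topology. I will build the $b_k$ by induction on $k$, upgrading an approximation modulo $s^k$ to one modulo $s^{k+1}$.

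The base case $k = 1$ follows from Lemma \ref{lemma:pure=s} together with the definition of $\Gamma$: the reduction $\gamma \pmod{t-1}$ lies in $(S_n)_{perm} \subseteq \GL_n(\Z)$, and any word in $B_n$ lifting the associated permutation $\bar\gamma \in S_n$ furnishes a suitable $b_1$.

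For the inductive step, assume $b_k$ has been constructed, and set $\eta_k = \gamma \beta(b_k)^{-1} \in \Gamma[s^k]$. The hypothesis $\mathfrak g(\Gamma) = \mathfrak g(\beta)$ in degree $k$ says precisely that
\[
\Gamma[s^k]/\Gamma[s^{k+1}] = \beta(B_n[s^k]) \pmod{\Gamma[s^{k+1}]},
\]
so one can choose $b' \in B_n[s^k]$ whose image $\beta(b')$ represents the same class as $\eta_k$ in the abelian quotient $\Gamma[s^k]/\Gamma[s^{k+1}]$ (abelian by Lemma \ref{lemma:abquotient}). Setting $b_{k+1} = b' b_k$ and computing
\[
\gamma \beta(b_{k+1})^{-1} = \gamma \beta(b_k)^{-1} \beta(b')^{-1} = \eta_k \beta(b')^{-1} \in \Gamma[s^{k+1}]
\]
completes the induction.

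I do not anticipate any real obstacle here: this proposition is the formal ``shadow'' of the substantive work, recasting the topological density statement into the algebraic assertion that every graded piece of $\mathfrak g(\Gamma)$ is realized by a braid image, with the step-$k$ lifting problem solved exactly by the step-$k$ equality of graded pieces. All of the genuine difficulty in Theorem \ref{theorem:image} is pushed onto the subsequent verification of the equality $\mathfrak g(\Gamma) = \mathfrak g(\beta)$, which according to the summary of the argument requires the ``unexpectedly deep'' element $\delta$ in degree $5$ and the degree-independence feature of Lemma \ref{lemma:commutes}.
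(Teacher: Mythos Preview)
Your proof is correct and follows essentially the same successive-approximation argument as the paper: both construct, by induction on $k$, an element of $\im(\beta)$ agreeing with $\gamma$ modulo $\Gamma[s^{k+1}]$, using the degree-$k$ equality of graded pieces to correct the error at each stage. Your base case is in fact slightly more explicit than the paper's, as you spell out why $b_1$ exists by invoking the condition $A \pmod{t-1} \in (S_n)_{perm}$ from Definition~\ref{definition:gamma} together with Lemma~\ref{lemma:pure=s}.
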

\begin{proof}
Let $\gamma \in \Gamma$ be arbitrary. It suffices to construct a sequence $A_0, A_1, \dots$ of elements in $\im(\beta)$ with $\gamma^{-1} A_k \in \Gamma[s^{k+1}]$. This will be constructed inductively. Let $A_0 \in \im(\beta)$ be chosen so that $(A_0)_{(0)} = (\gamma)_{(0)}$ as elements of $\Gamma / \Gamma[s]$. Assuming that $A_0, \dots, A_{k-1}$ have been constructed, it follows that
\[
\gamma^{-1} A_{k-1} \in \Gamma[s^{k}].
\]
By hypothesis, there is some $B_{k} \in \im(\beta) \cap \Gamma[s^{k}]$ such that $(B_{k})_{(k)} = \gamma^{-1} A_{k-1} \pmod{\Gamma[s^{k+1}]}$. Then $A_k$ can be constructed by setting $A_k = A_{k-1} B_k^{-1}$. 
\end{proof}

\section{Brackets in $\mathfrak g$}\label{section:brackets}
This section is devoted to a study of the $\Z$-Lie algebra $\mathfrak g$. The central result is Lemma \ref{lemma:brackets}, which studies the extent to which the Lie bracket maps $\pair{\cdot,\cdot}: G_1 \otimes G_k \to G_{k+1}$ are surjective. Prior to this we establish some basic properties of $\mathfrak g$ as an $S_n$-module (Lemma \ref{lemma:gSn}), as well as some useful bracket formulas in Lemma \ref{lemma:bracketformula}.

\subsection{$\mathfrak g(\Gamma)$ and $\mathfrak g$ as $S_n$-modules} 
In the language of Bass--Lubotzky \cite{BL}, $\{\Gamma[s^k]\}$ is a ``linear-central filtration'', and so the $\Z$-Lie algebra $\mathfrak g(\Gamma)$ carries an action of $\Gamma / \Gamma[s] \cong S_n$ induced by the conjugation action of $\Gamma$ on the normal subgroups $\Gamma[s^k]$. In this subsection we record this fact and some of its consequences.

\begin{lemma}\label{lemma:gismodule}
There is a grading-preserving action of $S_n$ on $\mathfrak g(\Gamma)$ induced from the conjugation action of $\Gamma$ on the terms $\Gamma[s^k]$ of the $s$-adic filtration. Under the embedding $\mathfrak g (\Gamma) \le \mathfrak g$ of Lemma \ref{lemma:gembed}, $S_n$ acts on each summand $G_k$ by conjugation by the standard permutation matrices in $\GL_n(\Z)$. 
\end{lemma}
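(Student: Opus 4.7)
The plan is to exploit the fact that each $\Gamma[s^k]$ is the kernel of a ring-reduction homomorphism and hence normal in $\Gamma$, upgrade the inner conjugation action of $\Gamma$ on the quotients to an action of $S_n$ by showing that $\Gamma[s]$ acts trivially, and finally identify the action explicitly using the $s$-adic product formula \eqref{equation:prodformula}.

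First I would record that $\Gamma[s^k]$ is normal in $\Gamma$ since it is defined as the kernel of $\Gamma \to \GL_n(\Lambda/\pair{s^k})$. The same holds for $\Gamma[s^{k+1}] \le \Gamma[s^k]$, so conjugation in $\Gamma$ induces a well-defined action on the abelian quotient $\Gamma[s^k]/\Gamma[s^{k+1}]$. This action is grading-preserving by construction and respects the Lie bracket, since the bracket on $\mathfrak g(\Gamma)$ is induced from the group commutator, which is equivariant under conjugation.

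Next I would show that the action factors through $\Gamma/\Gamma[s] \cong S_n$. For any $P \in \Gamma[s]$ and $A \in \Gamma[s^k]$, Lemma \ref{lemma:bracketgrading} gives
\[
PAP^{-1} A^{-1} = [P,A] \in \Gamma[s^{1+k}] = \Gamma[s^{k+1}],
\]
so the coset of $PAP^{-1}$ modulo $\Gamma[s^{k+1}]$ equals that of $A$. Thus $\Gamma[s]$ acts trivially. The identification $\Gamma/\Gamma[s] \cong S_n$ follows from the defining condition in Definition \ref{definition:gamma} that every element of $\Gamma$ reduces to a permutation matrix mod $t-1$, together with the fact (Lemma \ref{lemma:pure=s}) that $\beta(B_n) \subseteq \Gamma$ already surjects onto $(S_n)_{perm}$ after reduction.

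Finally, to identify the $S_n$-action under the embedding $\mathfrak g(\Gamma) \le \mathfrak g$, fix $\pi \in S_n$ with permutation matrix $P_\pi$, pick any lift $P \in \Gamma$ with $(P)_{(0)} = P_\pi$, and let $A \in \Gamma[s^k]$. Applying \eqref{equation:prodformula} twice to the triple product $P A P^{-1}$ gives
\[
(PAP^{-1})_{(k)} = \sum_{i+j+\ell = k} (P)_{(i)} (A)_{(j)} (P^{-1})_{(\ell)}.
\]
Since $(A)_{(j)} = 0$ for $1 \le j < k$ by assumption, the only surviving term is the one with $j = k$ and $i = \ell = 0$, yielding
\[
(PAP^{-1})_{(k)} = (P)_{(0)} (A)_{(k)} (P^{-1})_{(0)} = P_\pi (A)_{(k)} P_\pi^{-1},
\]
which is precisely conjugation by the permutation matrix. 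This is independent of the chosen lift $P$ by the preceding paragraph.

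The argument is essentially formal; the only point requiring some care is the $s$-adic bookkeeping in the final step, where one must verify that higher-order coefficients of $P$ and $P^{-1}$ do not contribute to the degree-$k$ piece of $PAP^{-1}$. The vanishing of $(A)_{(j)}$ for $1 \le j < k$, forced by $A \in \Gamma[s^k]$, collapses the triple sum to its single surviving term and makes the identification transparent.
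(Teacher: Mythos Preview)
Your proof is correct and follows the same route as the paper, which simply invokes Lemma~\ref{lemma:bracketgrading} to factor the conjugation action through $\Gamma/\Gamma[s]$ and then identifies this quotient with the permutation matrices; you have merely supplied the details the paper leaves implicit. One small fix in your final computation: the triple sum also has surviving summands with $j=0$ (since $(A)_{(0)}=I$, not $0$), but these contribute $\sum_{i+\ell=k}(P)_{(i)}(P^{-1})_{(\ell)}=(PP^{-1})_{(k)}=(I)_{(k)}=0$ for $k\ge 1$, so your conclusion stands.
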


\begin{proof}
It follows from Lemma \ref{lemma:bracketgrading} that for $k \ge 1$, the conjugation action of $\Gamma$ on $\Gamma[s^k]$ induces an action of $\Gamma/\Gamma[s]$ on $\Gamma[s^k]/\Gamma[s^{k+1}]$. We have already observed in Lemma \ref{lemma:pure=s} that $\Gamma / \Gamma[s]$ is a subgroup of $\GL_n(\Z)$ isomorphic to $S_n$ represented as the group of permutation matrices. 
\end{proof}

\subsection{Generating $\mathfrak g$}
Lemma \ref{lemma:gismodule} establishes that $\mathfrak g$ is a module over $S_n$. In this subsection, we develop an explicit set of generators for $\mathfrak g$ as an $S_n$-module.

\begin{definition}[$X_{ij}$ and $Y_{ijk}$]\label{definition:XY}
For $1 \le a,b \le n$, let $E_{ab} \in M_n(\Z)$ denote the matrix with entry $(E_{ab})_{ab} = 1$ and with all other entries zero. For $i \ne j$, define $X_{ij} \in G_1$ by
\[
X_{ij} = E_{ii} + E_{jj} - E_{ij} - E_{ji},
\]
and for $1\le i< j< k\le n$, define $Y_{ijk} \in G_2$ by
\[
Y_{ijk} = (E_{ij} - E_{ji}) -(E_{ik}- E_{ki}) + (E_{jk} - E_{kj}).
\]
\end{definition}

\begin{lemma}\label{lemma:gSn}\ ~
\begin{enumerate}
\item\label{item:odd} For $n \ge 5$ and $k \ge 3$ odd, $G_k$ is generated as an $S_n$-module by any element of the form $X_{ij} - X_{k\ell}$ for integers $i,j,k,\ell$ such that $\#\{i,j,k,\ell\} \ge 3$. 
\item \label{item:even} For $n \ge 3$ and $k \ge 2$ even, $G_k$ is generated as an $S_n$-module by any element of the form $Y_{ijk}$ for distinct integers $i,j,k$.\end{enumerate}
\end{lemma}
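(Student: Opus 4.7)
The plan is to handle the two parts separately, in each case exhibiting a natural $\Z$-basis for the target and then showing that the $S_n$-orbit of the proposed generator recovers this basis.

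For (2), my first step is to verify that $\{Y_{1jk}\}_{2 \le j < k \le n}$ is a $\Z$-basis of $G_k$ for $k$ even. Both sides have rank $\binom{n-1}{2}$: the skew-symmetric $n \times n$ matrices have rank $\binom{n}{2}$, and the constraint $M\vec 1 = 0$ cuts out $n-1$ independent conditions (the $n$-th being forced, since $\vec 1^T M \vec 1 = 0$ automatically when $M^T = -M$). For any $M \in G_k$, the combination $\sum_{1 < i < j} M_{ij} Y_{1ij}$ matches $M$ at all entries $(i,j)$ with $1 < i < j$, and the row-sum-zero condition then forces agreement on the rest. Next, by Lemma \ref{lemma:gismodule} the conjugation action of $S_n$ satisfies $\sigma \cdot Y_{ijk} = \pm Y_{\sigma(i) \sigma(j) \sigma(k)}$, with the sign determined by the parity of the reordering to canonical form. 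Hence the $S_n$-orbit of any single $Y_{ijk}$ contains every $Y_{1jk}$, and so generates $G_k$ as a $\Z$-module.

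For (1), I would first prove the analogous structural result that $\{X_{ij}\}_{i<j}$ is a $\Z$-basis of $V := \{M \in M_n(\Z) : M^T = M,\ M\vec 1 = 0\}$. The key observation is that for any $M \in V$ the combination $-\sum_{i<j} M_{ij} X_{ij}$ recovers $M$ off the diagonal, and the condition $M\vec 1 = 0$ forces agreement on the diagonal. Since $\tr(X_{ij}) = 2$, a $\Z$-combination $\sum c_{ij} X_{ij}$ lies in $G_k$ for $k \ge 3$ odd if and only if $\sum c_{ij} = 0$; equivalently, $G_k$ is $\Z$-spanned by the collection of differences $X_\alpha - X_\beta$, where $\alpha, \beta$ range over unordered pairs from $\{1, \dots, n\}$.

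To finish (1), I split into cases based on $m := \#\{i,j,k,\ell\}$. If $m = 3$, the $S_n$-orbit of $X_{ij} - X_{k\ell}$ consists of all $X_\alpha - X_\beta$ with $|\alpha \cap \beta| = 1$; writing an arbitrary disjoint-pair difference as $X_{ab} - X_{cd} = (X_{ab} - X_{bc}) + (X_{bc} - X_{cd})$ (each summand being a share-one-element difference) shows that the orbit $\Z$-spans all of $G_k$. If $m = 4$, the orbit consists of all disjoint-pair differences $X_\alpha - X_\beta$; here the rewriting $X_{ab} - X_{ac} = (X_{ab} - X_{de}) - (X_{ac} - X_{de})$, valid for any choice of distinct $d, e$ outside $\{a,b,c\}$, reduces this case to the previous one. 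This last reduction is the main technical obstacle and is precisely what forces the hypothesis $n \ge 5$: without a fifth free index, no disjoint pair sits outside both $\{a,b\}$ and $\{a,c\}$ simultaneously, and one can check that for $n = 4$ the $S_4$-span of the disjoint-pair differences is only $3$-dimensional, strictly smaller than $G_k$.
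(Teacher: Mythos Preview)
Your argument is correct and follows essentially the same route as the paper's proof: you establish that the $X_{ij}$ form a $\Z$-basis of the symmetric row-sum-zero matrices (the paper leaves this to the reader), reduce the odd case to passing between the three-index and four-index orbits via the identities $X_{ab}-X_{cd}=(X_{ab}-X_{bc})+(X_{bc}-X_{cd})$ and $X_{ab}-X_{ac}=(X_{ab}-X_{de})-(X_{ac}-X_{de})$ (the paper uses the same identity for the latter and calls the former ``similar''), and handle the even case by showing the $Y_{ijk}$ span, which the paper dismisses as ``elementary linear algebra over $\Z$''. Your explicit choice of the basis $\{Y_{1jk}\}$ and the remark on why $n\ge 5$ is genuinely needed are nice additions, but the underlying strategy is the same.
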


\begin{proof}
We leave it to the reader to verify that $G_1$ is generated as an abelian group by the collection of elements $X_{ij}$ for $1 \le i < j \le n$. Each such $X_{ij}$ has trace $2$, and so for $k \ge 3$ odd, $G_k$ is generated by the collection of elements $X_{ij} - X_{k\ell}$ for some possibly-redundant set of indices $\{i,j,k,l\}$. It is clear that there is exactly one $S_n$-orbit each of three- and four-element index sets, so it remains only to show how to express an element $X_{ij}- X_{ik}$ in the $\Z[S_n]$-span of $X_{ij}-X_{k\ell}$, and conversely. The first of these is shown below; the other is similar.
\[
X_{12} - X_{13} = (X_{12} - X_{45}) - (X_{13} - X_{45}).
\]

The proof of (2) is essentially immediate - it is only necessary to check that the collection of all $Y_{ijk}$ span $G_{k}$ for $k$ even, and this is a matter of elementary linear algebra over $\Z$.
\end{proof}

We next begin the process of studying $\mathfrak g$ as a $\Z$-Lie algebra. In what follows we will only need to rely on the following computations, all of which follow by direct verification.

\begin{lemma}\label{lemma:bracketformula}
We have the following relations among the elements $X_{ij}, Y_{ijk} \in \mathfrak g$:
\begin{enumerate}
\item \label{item:xx1} For $1 \le i < j < k \le n$,
\[
\pair{X_{ij}, X_{ik}}= Y_{ijk}.
\]
\item\label{item:xx2} If $\#\{i,j,k,\ell\} \ne 3$, then
\[
\pair{X_{ij},X_{k\ell}}= 0.
\]
\item \label{item:xy}
\[
\pair{X_{ij}, Y_{ijk}} = 2 (X_{ik} - X_{jk}).
\]
\end{enumerate}
\end{lemma}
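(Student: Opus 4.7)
The plan is to verify each identity by direct matrix computation using the multiplication rule $E_{ab}E_{cd} = \delta_{bc}E_{ad}$, streamlined by the rank-one factorization $X_{ij} = u_{ij} u_{ij}^T$ where $u_{ij} := e_i - e_j$. One checks immediately from Definition~\ref{definition:XY} that $Y_{ijk} = v w^T - w v^T$ with $v = e_i - e_k$ and $w = e_j - e_k$. Since all three identities are $S_n$-equivariant in their indices (by Lemma~\ref{lemma:gismodule}), it suffices to verify each one for a single representative choice of indices, e.g.\ $(i,j,k)=(1,2,3)$.

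Item (2) is the quickest. From the general identity $\pair{aa^T, bb^T} = (a\cdot b)(ab^T - ba^T)$, the case $\#\{i,j,k,\ell\}=4$ follows because $u_{ij}$ and $u_{k\ell}$ then have disjoint supports, hence $u_{ij} \cdot u_{k\ell} = 0$. The only other possibility consistent with $i\ne j$, $k\ne\ell$, and $\#\{i,j,k,\ell\}\ne 3$ is $\#\{i,j,k,\ell\}=2$, in which case $\{i,j\}=\{k,\ell\}$, so $X_{ij} = X_{k\ell}$ (note $X_{ij}$ is symmetric in its indices) and the bracket vanishes.

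For item (1), set $u = u_{ij}$ and $v = u_{ik}$; then $u \cdot v = 1$, so the identity above gives $\pair{X_{ij}, X_{ik}} = u v^T - v u^T$, and expanding $u v^T - v u^T$ in the standard basis produces exactly $Y_{ijk}$. For item (3), apply $\pair{uu^T, \cdot}$ term-by-term to $Y_{ijk} = v w^T - w v^T$ (with the same $u,v,w$ as in the setup). Using the scalar identities $u \cdot v = 1$ and $u \cdot w = -1$, this collapses to
\[
\pair{X_{ij}, Y_{ijk}} = u w^T + w u^T + u v^T + v u^T.
\]
The substitution $w = v - u$ then reduces the right-hand side to $2(u v^T + v u^T - u u^T)$, which in turn equals $2(v v^T - w w^T) = 2(X_{ik} - X_{jk})$ after the short algebraic manipulation $w w^T = (v-u)(v-u)^T = vv^T - vu^T - uv^T + uu^T$.

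There is no conceptual obstacle; the only mild care required is in tracking signs for item (3). The rank-one factorization trick is what keeps the bookkeeping manageable, replacing products of six-term lists of elementary matrices with elementary inner-product arithmetic on the three vectors $u,v,w$.
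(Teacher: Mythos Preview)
Your proof is correct and is essentially the direct verification the paper alludes to; your rank-one factorization $X_{ij} = u_{ij}u_{ij}^T$ and $Y_{ijk} = vw^T - wv^T$ is a clean organizing device that reduces the matrix brackets to inner-product arithmetic, and each step checks out. One small remark: the $S_n$-equivariance reduction you mention is unnecessary, since your argument already works for arbitrary indices.
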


It turns out $\mathfrak g$ is {\em not} generated by $G_1$ as a $\Z$-Lie algebra, but that this is true ``away from $2$''. The next lemma makes this precise.

\begin{lemma}\label{lemma:brackets}
For $n \ge 5$ and $k \ge 1$, there is an equality
\[
\pair{G_1,G_{2k-1}} = G_{2k}
\]
and a containment
\[
2 G_{2k+1} \le \pair{G_1, G_{2k}}.
\]
\end{lemma}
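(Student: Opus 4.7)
The plan is to deduce both statements by a single bracket computation in each case, and then use the $S_n$-equivariance of the Lie bracket (Lemma \ref{lemma:gismodule}) together with the $S_n$-generating sets of Lemma \ref{lemma:gSn} to spread this to all of $G_{2k}$ (respectively all of $2G_{2k+1}$).

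First I would record the ``easy'' containments: in $\mathfrak g$ the bracket of a symmetric matrix with a symmetric matrix (each with vanishing row sums) is skew-symmetric with vanishing row sums, while the bracket of a symmetric and a skew-symmetric matrix (each with vanishing row sums) is symmetric with vanishing row sums and automatic zero trace. Thus $\pair{G_1,G_{2k-1}}\subseteq G_{2k}$ and $\pair{G_1,G_{2k}}\subseteq G_{2k+1}$ hold for free, and it remains only to prove the surjectivity / divisibility statements.

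For the equality $\pair{G_1,G_{2k-1}}=G_{2k}$, note that $Y_{ijk}$ (as an element of $M_n(\Z)$) lies in $G_{2\ell}$ for every $\ell\ge 1$, and that by Lemma \ref{lemma:gSn}(\ref{item:even}) a single $Y_{ijk}$ generates $G_{2k}$ as an $S_n$-module. So it suffices to realize $Y_{123}$ as a bracket. When $k=1$ this is immediate from Lemma \ref{lemma:bracketformula}(\ref{item:xx1}): $\pair{X_{12},X_{13}}=Y_{123}$. For $k\ge 2$ the element $X_{13}$ alone no longer lies in $G_{2k-1}$ because it has trace $2$; this is precisely where the hypothesis $n\ge 5$ enters, allowing us to correct the trace by subtracting $X_{45}$. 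The element $Z=X_{13}-X_{45}$ is symmetric, has vanishing row sums, and has trace $0$, so $Z\in G_{2k-1}$, and Lemma \ref{lemma:bracketformula}(\ref{item:xx1}),(\ref{item:xx2}) give
\[
\pair{X_{12},\,X_{13}-X_{45}}=Y_{123}-0=Y_{123}.
\]
Closing under the $S_n$-action gives the required equality.

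For the containment $2G_{2k+1}\le\pair{G_1,G_{2k}}$, I would invoke Lemma \ref{lemma:bracketformula}(\ref{item:xy}): for $i<j<k'$, one has $Y_{ijk'}\in G_{2k}$ (again the matrix is the same, the grading just shifts) and
\[
\pair{X_{ij},\,Y_{ijk'}}=2(X_{ik'}-X_{jk'}).
\]
Since $X_{ik'}-X_{jk'}$ is an element of the form $X_{ab}-X_{cd}$ with $\#\{a,b,c,d\}=3$, Lemma \ref{lemma:gSn}(\ref{item:odd}) says such elements generate $G_{2k+1}$ as an $S_n$-module; closing $2(X_{ik'}-X_{jk'})$ under the $S_n$-action therefore yields all of $2G_{2k+1}$ inside $\pair{G_1,G_{2k}}$.

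There is no serious obstacle — both claims follow directly once the bracket formulas of Lemma \ref{lemma:bracketformula} are combined with $S_n$-equivariance. The only conceptual point worth highlighting is that the factor of $2$ appearing in Lemma \ref{lemma:bracketformula}(\ref{item:xy}) is exactly the reason one obtains only $2G_{2k+1}$ and not $G_{2k+1}$ itself, reflecting the well-known failure (pointed out in the introduction) of $\mathfrak g$ to be generated in degree $1$ over $\Z$ (as opposed to over $\Z[1/2]$).
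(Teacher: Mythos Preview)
Your proof is correct and follows essentially the same approach as the paper: reduce to exhibiting a single $Y_{ijk}$ (resp.\ a single $2(X_{ik}-X_{jk})$) as a bracket, then use the $S_n$-module generation from Lemma~\ref{lemma:gSn} together with $S_n$-equivariance of the bracket. The only cosmetic difference is that the paper corrects the trace with $X_{13}-X_{34}$ rather than your $X_{13}-X_{45}$, and does not bother to spell out the ``easy'' containments you record.
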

\begin{proof}
Following Lemma \ref{lemma:gSn}, it suffices to exhibit an element of the form $Y_{ijk}$ in the image of $\pair{\cdot, \cdot}: G_1 \otimes G_{2k-1} \to G_{2k}$, and to exhibit some $2 (X_{ik} - X_{jk})$ in the image of $\pair{\cdot, \cdot}: G_1 \otimes G_{2k} \to G_{2k+1}$. Both of these follow readily from Lemma \ref{lemma:brackets}, the second of which is actually immediate by Lemma \ref{lemma:brackets}.\ref{item:xy}. For the first, we combine items \eqref{item:xx1} and \eqref{item:xx2} of Lemma \ref{lemma:brackets} to observe that
\[
\pair{X_{12},X_{13}-X_{34}} = Y_{123}
\]
as required.
\end{proof}

\section{Beyond the associated graded: the main lemma}\label{section:mainlemma}
As observed in Proposition \ref{proposition:density}, Theorem \ref{theorem:image} will follow from the equality $\mathfrak g(\Gamma) = \mathfrak g(\beta)$. This will be established by induction on the degree of grading. In this section, we formulate and prove Lemma \ref{lemma:commutes}, which will serve as the crucial tool used to establish the inductive step. To do so, we will first establish Lemma \ref{lemma:2k+1}, which shows how to extract some information on commutators that is invisible to the graded Lie algebra $\mathfrak g$.

Lemma \ref{lemma:brackets} implies that the Lie bracket on $\mathfrak g$ induces a short exact sequence
\[
1 \to K_{2k-1} \to G_1\otimes G_{2k-1} \to G_{2k} \to 1.
\]
Said differently, if
\begin{equation}\label{equation:a}
a = \sum_{i=1}^m X_{I_i} \otimes W_i 
\end{equation}
is an arbitrary element of $K_{2k-1}$ and 
\begin{equation}\label{equation:alpha}
\alpha = \prod_{i=1}^m [\beta(A_{I_i}), \omega_i] 
\end{equation}
is any element of $\Gamma$ with each $\omega_i \in \Gamma[s^{2k-1}]$ satisfying $(\omega_i)_{(2k-1)} = W_i \in G_{2k-1}$, then $\alpha \in \Gamma[s^{2k+1}]$. We see in Lemma \ref{lemma:commutes} below that the Hermitian structure on $\Gamma$ allows one to recover some information about $(\alpha)_{(2k+1)}$ purely from the associated $a \in K_{2k-1}$. Before being able to accomplish this, we require a preliminary study of $(\alpha)_{(2k+1)}$.

\begin{lemma}\label{lemma:2k+1}
Let $\alpha \in \Gamma[s^{2k+1}]$ be given as in \eqref{equation:alpha}, with $\omega_i \in \Gamma[s^{2k-1}]$ for $1 \le i \le m$. Then
\begin{equation}\label{equation:2k+1}
(\alpha)_{(2k+1)} = \sum_{i = 1}^m \left( \pair{X_{I_i}, (\omega_i)_{(2k)}} + \pair{\beta(A_{I_i})_{(2)}, W_i}+\pair{W_i,X_{I_i}}X_{I_i}\right).
\end{equation}
\end{lemma}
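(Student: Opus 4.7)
I would prove the formula by first expanding a single commutator $c_i := [\beta(A_{I_i}), \omega_i]$ through $s$-adic degree $2k+1$, and then assembling the product $\alpha = \prod_i c_i$, using the fact that cross terms in the product lie in $s$-adic degree at least $4k \ge 2k+2$ and so do not contribute.

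Fix $i$ and abbreviate $B := \beta(A_{I_i})$, $X := X_{I_i}$, $\omega := \omega_i$, $W := W_i$; set $b := B - I$ and $w := \omega - I$. The starting point is the compact identity
\[
[B, \omega] - I \;=\; (B\omega - \omega B)\, B^{-1} \omega^{-1} \;=\; (bw - wb)\, B^{-1} \omega^{-1},
\]
which reduces the task to the expansion of $bw - wb$ together with the low-order terms of $B^{-1}\omega^{-1}$. Applying the product formula \eqref{equation:prodformula} to $b = sX + s^2 (B)_{(2)} + O(s^3)$ and $w = s^{2k-1} W + s^{2k}(\omega)_{(2k)} + O(s^{2k+1})$ yields $(bw-wb)_{(j)} = 0$ for $j < 2k$, together with
\[
(bw-wb)_{(2k)} = \pair{X, W}, \qquad (bw-wb)_{(2k+1)} = \pair{X,(\omega)_{(2k)}} + \pair{(B)_{(2)}, W}.
\]

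For the factor $B^{-1}\omega^{-1}$, Lemma \ref{lemma:sadicinverse} applied to $B \in \Gamma[s]$ gives $(B^{-1})_{(1)} = -X$, while the same lemma applied to $\omega \in \Gamma[s^{2k-1}]$ (with $2k-1 \ge 3$, i.e.\ $k \ge 2$) gives $(\omega^{-1})_{(1)} = 0$, so $(B^{-1}\omega^{-1})_{(0)} = I$ and $(B^{-1}\omega^{-1})_{(1)} = -X$. Combining via \eqref{equation:prodformula} and using that $(bw-wb)_{(j)}$ vanishes for $j < 2k$, only $j \in \{2k, 2k+1\}$ contribute to the degree $2k+1$ piece of $(bw - wb)B^{-1}\omega^{-1}$, yielding
\[
([B,\omega])_{(2k+1)} \;=\; -\pair{X, W}\,X + \pair{X, (\omega)_{(2k)}} + \pair{(B)_{(2)}, W} \;=\; \pair{W,X}\,X + \pair{X, (\omega)_{(2k)}} + \pair{(B)_{(2)}, W}.
\]
To pass from individual commutators to the product, write each $c_i = I + s^{2k} P_i + s^{2k+1} Q_i + O(s^{2k+2})$. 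Every cross term $s^{2k} P_i \cdot s^{2k} P_j$ arising in $\prod_i c_i$ lies in $s^{4k} M_n(\Lambda) \subseteq s^{2k+2} M_n(\Lambda)$ for $k \ge 1$, so by a short induction
\[
\alpha \;=\; I + s^{2k}\sum_i P_i + s^{2k+1}\sum_i Q_i + O(s^{2k+2}).
\]
The hypothesis $a \in K_{2k-1}$ says exactly that $\sum_i P_i = \sum_i \pair{X_{I_i}, W_i} = 0$, confirming $\alpha \in \Gamma[s^{2k+1}]$, and $(\alpha)_{(2k+1)} = \sum_i Q_i$ is then the asserted formula.

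The main obstacle is the careful tracking of what happens in degree $2k+1$ when the leading degree-$2k$ piece $\pair{X,W}$ of $bw - wb$ interacts with the degree-$1$ contribution $-X$ coming from $B^{-1}$; this is the source of the ``extra'' summand $\pair{W,X}X$, which is invisible to the associated graded Lie algebra $\mathfrak g$. The compactness of the identity $[B,\omega] = I + (bw - wb)B^{-1}\omega^{-1}$ is what keeps the calculation tractable, since expanding the fourfold product $B\omega B^{-1}\omega^{-1}$ termwise would produce many additional contributions requiring cancellation.
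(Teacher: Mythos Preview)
Your proof is correct and follows the same overall strategy as the paper: compute the degree-$(2k+1)$ coefficient of a single commutator by direct $s$-adic expansion, then observe that the product $\alpha = \prod_i c_i$ introduces no further contributions below degree $4k$. The paper organizes the single-commutator calculation differently, starting from the formula \eqref{equation:XYXY}
\[
[X,Y] = I + \sum_{\ell \ge 2k-1} s^{\ell}\bigl(X (Y)_{(\ell)} X^{-1} + (Y^{-1})_{(\ell)}\bigr)
\]
and then expanding each conjugate $X (Y)_{(\ell)} X^{-1}$; this requires the auxiliary identity $(X^{-1})_{(2)} = (X)_{(1)}^{2} - (X)_{(2)}$ to see the term $\pair{W,X}X$. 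Your identity $[B,\omega] - I = (bw - wb)\,B^{-1}\omega^{-1}$ is a cleaner packaging: it isolates the bracket $bw - wb$ up front, so the ``extra'' summand $\pair{W,X}X$ appears transparently as the single interaction between the degree-$2k$ part of $bw-wb$ and the degree-$1$ part $-X$ of $B^{-1}$, with no need to track $(B^{-1})_{(2)}$. Both routes are the same computation at heart, but yours has fewer moving parts.
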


\begin{proof}
This is a direct computation with $s$-adic expansions, building off of \eqref{equation:XYXY} in the proof of Lemma \ref{lemma:bracketgrading}. This expression is reproduced (in a slightly expanded form) for convenience below.
\[
[X,Y]  = I + s^\ell (X (Y)_{(\ell)} X^{-1} + (Y^{-1})_{(\ell)}) +s^{\ell+1} (X (Y)_{(\ell+1)} X^{-1} + (Y^{-1})_{(\ell+1)})	+ \dots
\]
We apply this formula with $X = \beta(A_{ij})$ and $Y = \omega \in \Gamma[s^{2k-1}]$ arbitrary. For $\ell = 2k-1,2k,2k+1$, the summand
\[
s^\ell(X (Y)_{(\ell)} X^{-1} + (Y^{-1})_{(\ell)})
\]
contributes to $([X,Y])_{(2k+1)}$. Since $2k-1 \ge 3$, Lemma \ref{lemma:sadicinverse} implies that $(Y^{-1})_{(\ell)} = -(Y)_{(\ell)}$ in this range. One also verifies that $(X^{-1})_{(2)} = (X)_{(1)}^2 - (X)_{(2)}$. Armed with these facts, the claim now follows by a direct computation.
\end{proof}

\begin{lemma}\label{lemma:main}
For $k \ge 2$ there is a homomorphism
\[
\phi_{2k-1}: K_{2k-1} \to G_{2k+1}/\pair{G_1,G_{2k}}
\]
defined as follows: for $a$ and $\alpha$ as in \eqref{equation:a}, \eqref{equation:alpha} as above, the assignment 
\[
\phi_{2k-1}(a) =(\alpha)_{(2k+1)}
\]
is a well-defined element of $G_{2k+1}/\pair{G_1,G_{2k}}$. \end{lemma}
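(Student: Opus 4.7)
My plan is to apply Lemma \ref{lemma:2k+1}, which gives the explicit expansion \eqref{equation:2k+1} for $(\alpha)_{(2k+1)}$, and then check that this expression is well-defined modulo $\pair{G_1, G_{2k}}$. Three choices are implicit in the definition of $\alpha$: the decomposition $a = \sum X_{I_i} \otimes W_i$, the lifts $\omega_i \in \Gamma[s^{2k-1}]$ of $W_i$, and the ordering of the product. I will address each in turn, after which additivity will follow by concatenation of decompositions.

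Independence of the lifts $\omega_i$ is immediate: the only summand in \eqref{equation:2k+1} that depends on $\omega_i$ beyond its leading term $W_i$ is $\pair{X_{I_i}, (\omega_i)_{(2k)}}$, and this summand already lies in $\pair{G_1, G_{2k}}$; any other choice of lift shifts $(\omega_i)_{(2k)}$ by an element of $G_{2k}$, which is absorbed by the quotient. Independence of ordering follows from Lemma \ref{lemma:bracketgrading}: each commutator $[\beta(A_{I_i}), \omega_i]$ lies in $\Gamma[s^{2k}]$, and cross-terms arising when swapping two such factors lie in $\Gamma[s^{4k}] \subseteq \Gamma[s^{2k+2}]$ (since $k \ge 1$), so they do not contribute to $(\alpha)_{(2k+1)}$.

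The main obstacle is independence of the decomposition. After discarding the terms in $\pair{G_1, G_{2k}}$, the formula reduces to
\[
\sum_i \left( \pair{\beta(A_{I_i})_{(2)}, W_i} + \pair{W_i, X_{I_i}} X_{I_i} \right),
\]
which is $\Z$-linear in each $W_i$ by bilinearity of the Lie bracket. Grouping contributions sharing a fixed generator $X_{ij}$, the total contribution depends only on the sum $\sum_{I_i = (ij)} W_i$, which is precisely the coefficient of $X_{ij}$ in the unique basis expansion of $a$ with respect to the $\Z$-basis $\{X_{ij}\}_{i<j}$ of $G_1$. The subtlety is that the term $\pair{W_i, X_{I_i}} X_{I_i}$ appears quadratic in $X_{I_i}$, but the restriction that each $X_{I_i}$ be a generator from this basis removes the ambiguity; this is exactly where it matters that only standard pure braid generators appear in the decomposition. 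The homomorphism property then follows by forming a decomposition of $a + a'$ via concatenation of the decompositions of $a$ and $a'$ and invoking the ordering-independence already established.
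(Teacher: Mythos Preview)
Your argument contains a genuine gap. You assert that the summand $\pair{X_{I_i}, (\omega_i)_{(2k)}}$ in \eqref{equation:2k+1} ``already lies in $\pair{G_1, G_{2k}}$'', and you use this both in the lift-independence paragraph and, crucially, when you ``discard'' these terms to argue decomposition-independence. But $(\omega_i)_{(2k)}$ is the $s^{2k}$-coefficient of an element of $\Gamma[s^{2k-1}]$, not of $\Gamma[s^{2k}]$, so there is no reason for it to lie in $G_{2k}$. Indeed, the computation of $(\omega)_{(2k)}^+$ carried out in the proof of Lemma~\ref{lemma:commutes} shows its symmetric part is generically nonzero, whereas elements of $G_{2k}$ are skew-symmetric. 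Your lift-independence argument survives, because your second clause (that a change of lift shifts $(\omega_i)_{(2k)}$ by an element of $G_{2k}$) is correct and already sufficient on its own. But your decomposition-independence argument collapses: once those bracket terms remain, the expression is no longer visibly a function of the basis coefficients $\sum_{I_i = (ij)} W_i$ alone.

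The paper's proof avoids this issue by never appealing to the explicit formula \eqref{equation:2k+1}. Instead it works directly in $\Gamma$: given $\alpha$ and $\alpha'$ built from the same $I_i$ and $W_i$ but different lifts, it uses Lemma~\ref{lemma:bracketgrading} and commutator identities to show
\[
\alpha\,(\alpha')^{-1} \equiv \prod_i [\beta(A_{I_i}),\, \omega_i(\omega_i')^{-1}] \pmod{\Gamma[s^{2k+2}]},
\]
with each $\omega_i(\omega_i')^{-1} \in \Gamma[s^{2k}]$, so that each factor contributes an element of $\pair{G_1,G_{2k}}$ at level $2k+1$. The paper in fact treats only lift-independence explicitly and leaves decomposition-independence and the homomorphism property implicit, so your instinct to check them was good. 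A clean repair within your framework: to combine $X_{ij}\otimes W_1 + X_{ij}\otimes W_2$ into $X_{ij}\otimes(W_1+W_2)$, choose $\omega_1\omega_2$ as the lift of $W_1+W_2$ and verify via the filtration (conjugation by $\omega_1\in\Gamma[s^{2k-1}]$ is trivial on $\Gamma[s^{2k}]/\Gamma[s^{2k+2}]$ for $k\ge 2$) that $[\beta(A_{ij}),\omega_1\omega_2]$ agrees with $[\beta(A_{ij}),\omega_1][\beta(A_{ij}),\omega_2]$ modulo $\Gamma[s^{2k+2}]$.
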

\begin{proof}
It suffices to show that if $\alpha$ is replaced by some
\[
\alpha'= \alpha = \prod_{i=1}^m [\beta(A_{I_i}), \omega'_i],
\]
with each $(\omega'_i)_{(2k-1)} = W_i$, then 
\[
(\alpha \alpha'^{-1})_{(2k+1)} \in \pair{G_1,G_{2k}}.
\]
To see this, we consider the image of $\alpha \alpha'^{-1}$ in $\Gamma/\Gamma[s^{2k+2}]$. As $\beta(A_{I_i}) \in \Gamma[s]$ and $\omega_i, \omega'_i \in \Gamma[s^{2k-1}]$, we use Lemma \ref{lemma:bracketgrading} to write
\[
\alpha \alpha'^{-1} \equiv \prod_{i=1}^m [\beta(A_{I_i}),\omega_i (\omega'_i)^{-1}] \pmod {\Gamma[s^{2k+2}]}.
\]
By hypothesis, each $\omega_i (\omega'_i)^{-1} \in \Gamma[s^{2k}]$, so that each summand 
\[
([\beta(A_{I_i}),\omega_i (\omega'_i)^{-1}])_{(2k+1)} = \pair{X_{I_i},(\omega_i (\omega'_i)^{-1})_{(2k)}}
\]
 is an element of $\pair{G_1, G_{2k}}$ as required.
\end{proof}

\begin{lemma}\label{lemma:commutes}
For $k, \ell \ge 2$ arbitrary, the diagram
\[
\xymatrix{
K_{2k-1} \ar[r]^-{\phi_{2k-1}} \ar[d]	& G_{2k+1}/\pair{G_1,G_{2k}} \ar[d]\\
K_{2\ell-1} \ar[r]_-{\phi_{2\ell-1}} 	& G_{2\ell+1}/\pair{G_1,G_{2\ell}}
}
\]
commutes, where the vertical arrows are the evident isomorphisms induced from the isomorphisms $G_{2a + \epsilon} \cong G_{2b+\epsilon}$ for $a,b \ge 1$ and $\epsilon = 0,1$.
\end{lemma}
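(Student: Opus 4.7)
The plan is to unwind the definition of $\phi_{2k-1}$ via the explicit formula in Lemma \ref{lemma:2k+1}, and then observe that once we reduce modulo $\pair{G_1, G_{2k}}$, the resulting expression becomes manifestly independent of $k$ in a precise sense.

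Concretely, fix $a = \sum_{i=1}^m X_{I_i} \otimes W_i \in K_{2k-1}$ together with a realizing element $\alpha = \prod_{i=1}^m [\beta(A_{I_i}), \omega_i]$ as in \eqref{equation:alpha}. Lemma \ref{lemma:2k+1} immediately gives
\[
\phi_{2k-1}(a) \equiv \sum_{i=1}^m \bigl( \pair{X_{I_i}, (\omega_i)_{(2k)}} + \pair{\beta(A_{I_i})_{(2)}, W_i} + \pair{W_i, X_{I_i}}\, X_{I_i}\bigr) \pmod{\pair{G_1, G_{2k}}}.
\]
The first summand lies in $\pair{G_1, G_{2k}}$ on the nose (since $X_{I_i} \in G_1$ and $(\omega_i)_{(2k)} \in G_{2k}$), so it drops out of the quotient, leaving
\[
\phi_{2k-1}(a) \equiv \sum_{i=1}^m \bigl( \pair{\beta(A_{I_i})_{(2)}, W_i} + \pair{W_i, X_{I_i}}\, X_{I_i}\bigr) \pmod{\pair{G_1, G_{2k}}}.
\]

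Next I would note that for $k \ge 2$, the summands $G_{2k-1}, G_{2k}, G_{2k+1}$ are cut out of $M_n(\Z)$ by conditions (symmetry or skew-symmetry, annihilation of $\vec 1$, and in odd degree a trace condition) that make no reference to $k$. Consequently the identifications $G_{2a+\epsilon} \cong G_{2b+\epsilon}$ in the statement are literally the identity on underlying matrices. Under these identifications, $K_{2k-1}$ matches $K_{2\ell-1}$ (both are the kernel of the ``same'' matrix-bracket map $G_1 \otimes G_{\mathrm{odd}} \to G_{\mathrm{even}}$) and $\pair{G_1, G_{2k}}$ matches $\pair{G_1, G_{2\ell}}$ (both are the image of the ``same'' matrix-bracket map). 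The reduced formula above depends only on $X_{I_i}$, $\beta(A_{I_i})_{(2)}$, and $W_i$ — none of which changes under the vertical identifications — so $\phi_{2\ell-1}$ applied to the matched element produces exactly the same matrix in $M_n(\Z)$, which is the claimed commutativity.

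The essential content of the argument is thus the splitting of the Lemma \ref{lemma:2k+1} formula into a ``$k$-dependent'' piece that automatically lies in $\pair{G_1, G_{2k}}$, plus a ``$k$-independent'' piece that survives in the quotient. There is no serious technical obstacle; the role of Lemma \ref{lemma:commutes} is precisely to package this observation so that in the proof of Theorem \ref{theorem:image} the inductive step can be reduced to a single verification of $\phi$ in a fixed low degree, with the higher-degree analogues following automatically from the diagram.
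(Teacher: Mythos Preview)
There is a genuine gap. Your key step is the assertion that $(\omega_i)_{(2k)} \in G_{2k}$, so that $\pair{X_{I_i}, (\omega_i)_{(2k)}}$ lies in $\pair{G_1, G_{2k}}$ and drops out of the quotient. But this is false in general. The membership $(\omega_i)_{(2k-1)} \in G_{2k-1}$ established in Lemmas \ref{lemma:step1}--\ref{lemma:step3} applies only to the \emph{leading} coefficient of an element of $\Gamma[s^{2k-1}]$; the next coefficient $(\omega_i)_{(2k)}$ is just some matrix in $M_n(\Z)$. Concretely, expanding the fixed-vector equation $(\omega_i - I)v = 0$ in degree $2k$ gives $(\omega_i)_{(2k)}\vec 1 = -(\omega_i)_{(2k-1)}(v)_{(1)}$, which need not vanish; and the unitarity equation in degree $2k$ (see \eqref{equation:s2k}, \eqref{equation:omegaplus} in the paper's proof) shows that the symmetric part $(\omega_i)_{(2k)}^+$ is typically nonzero. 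So $(\omega_i)_{(2k)}$ satisfies neither defining condition of $G_{2k}$, and your reduction collapses.

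This is exactly the difficulty the paper's proof is working to overcome. The paper observes that since $(\alpha)_{(2k+1)}$ is symmetric, only the symmetric part $\pair{X_{I_i},(\omega_i)_{(2k)}}^+ = \pair{X_{I_i},(\omega_i)_{(2k)}^-}$ matters, and then uses the Hermitian relation $\omega^* J \omega = J$ in degree $2k$ to solve for $(\omega_i)_{(2k)}^+$ purely in terms of $W_i = (\omega_i)_{(2k-1)}$. From this one extracts the row vector $\vec 1\,(\omega_i)_{(2k)}^-$ in terms of $W_i$, and then builds an explicit skew-symmetric $\omega'$ with the same column sums, so that $(\omega_i)_{(2k)}^- - \omega' \in G_{2k}$ and $\pair{X_{I_i}, (\omega_i)_{(2k)}^-} \equiv \pair{X_{I_i}, \omega'} \pmod{\pair{G_1,G_{2k}}}$ with $\omega'$ depending only on $W_i$. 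That is the missing idea; without it the term $\pair{X_{I_i}, (\omega_i)_{(2k)}}$ genuinely depends on the choice of $\omega_i$ and on $k$, and the diagram cannot be shown to commute by your argument.
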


\begin{proof}
We must show that the formula \eqref{equation:2k+1} can be expressed entirely in terms of $\beta(A_{I_i})$ and $W_i \in G_{2k-1}$, at least modulo $\pair{G_1, G_{2k}}$. Of the terms in \eqref{equation:2k+1}, only the summands of the form $\pair{X_{ij},(\omega)_{(2k)}}$ do not visibly depend just on this data. We will use the Hermitian structure to see that they do.

Extending scalars from $\Z$ to $\frac{1}{2} \Z$, there is a decomposition
\[
M_n(\Z) = M_n(\Z)^+ \oplus M_n(\Z)^- \le M_n(\tfrac{1}{2} \Z)
\]
of matrices into their symmetric (resp. skew-symmetric) components; given $A \in M_n(\Z)$, we write
\[
A^\pm = \frac{A \pm A^T}{2}.
\]
As $(\alpha)_{(2k+1)} \in G_{2k+1}$ is symmetric, it suffices to see that $\pair{X_{ij},(\omega)_{(2k)}}^+$ can be expressed in terms of $\beta(A_{ij})$ and $W_i$ modulo $\pair{G_1,G_{2k}}$. 

$X_{ij}$ is itself symmetric, and from this one finds
\[
\pair{X_{ij},(\omega)_{(2k)}}^+ = \pair{X_{ij},(\omega)^-_{(2k)}}.
\]
Thus we are free to replace $(\omega)_{(2k)}$ with any skew-symmetric $\omega'$ satisfying
\[
\omega' \equiv (\omega)_{(2k)} \pmod{G_{2k}}.
\]
We will see that there is such an $\omega'$ whose entries depend only on $W_i$. 

Since $(\omega)_{(2k)}^-$ and $\omega'$ are skew-symmetric by assumption, the condition $\omega' \equiv (\omega)_{(2k)}^- \pmod{G_{2k}}$ is equivalent to the requirement that 
\[
((\omega)_{(2k)}^- - \omega') \vec 1 = 0,
\]
or by skew-symmetry,
\[
\vec 1 (\omega)_{(2k)} ^-= \vec 1 \omega'.
\]
From the equations $(\omega)_{(2k)} = (\omega)_{(2k)}^+ + (\omega)_{(2k)}^-$ and $\vec 1 (\omega)_{(2k)} = 0$, we see that
\[
\vec 1 \omega' = - \vec 1(\omega)_{(2k)}^+.
\]
To determine $\vec 1(\omega)_{(2k)}^+$, we look to the $s^{2k}$ term of $\omega^* J \omega - J$ which is zero by unitarity. From the equation $s^* = \frac{-s}{1+s}$ and the symmetry of $(\omega)_{(2k-1)}$, we find
\[
(\omega^*)_{(2k-1)} = -(\omega)_{(2k-1)}  \qquad (\omega^*)_{(2k)} = (\omega)_{(2k)}^T + (2k-1) (\omega)_{(2k-1)}.
\]
Extracting the $s^{2k}$ term of $(\omega^* J \omega - J) = 0$, 
\begin{equation}\label{equation:s2k}
\pair{(J)_{(1)},(\omega)_{(2k-1)}} + (\omega)_{(2k)}^T(J)_{(0)}  + (J)_{(0)} (\omega)_{(2k)} +(2k-1)(\omega)_{(2k-1)} (J)_{(0)} = 0.
\end{equation}
We recall from \eqref{equation:J0} that $(J)_{(0)} = 2 I - \bold{1}$ and that $\bold{1} (\omega)_{(2k-1)} = \bold{1} (\omega)_{(2k)} = 0$; inserting this into \eqref{equation:s2k} allows us to solve for $(\omega)_{(2k)}^+$:
\begin{equation}\label{equation:omegaplus}
(\omega)_{(2k)}^+ = -\tfrac{1}{4}(\pair{(J)_{(1)},(\omega)_{(2k-1)}}  + (4k-2)(\omega)_{(2k-1)}).
\end{equation}
The right-hand side of \eqref{equation:omegaplus} visibly depends only on $(\omega)_{(2k-1)} = W$ and allows one to express the row vector 
\[
u: = \vec 1 \omega'= - \vec1 (\omega)_{(2k)}^+
\]
in terms of $W$. 

From here it is a simple matter to construct a skew-symmetric $\omega'$ satisfying $\vec 1 \omega' = u$. Denote the entries of $u$ by $u_1, \dots, u_n$. Define
\[
(\omega')_{ij} = \begin{cases}
	u_1 + \dots + u_i	& i = j+1\\
	-(u_1 + \dots + u_i)	& j = i + 1\\
		0			& \mbox{otherwise}.
\end{cases}
\]
This is skew-symmetric by construction and visibly has the correct column sums for $1 \le i \le n-1$. In the final column, the sum is given by $-(u_1 + \dots + u_{n-1})$, and so it remains to show that $u_1 + \dots + u_n = 0$, or equivalently that $u \vec 1 = 0$. 

To see this, multiply the expression for $(\omega)_{(2k)}^+$ given in \eqref{equation:omegaplus} by the column vector $\vec 1$. As $(\omega)_{(2k-1)} \vec 1 = 0$, this gives
\[
(\omega)_{(2k)}^+ \vec 1= \tfrac{1}{4} (\omega)_{(2k-1)} (J)_{(1)}. 
\]
But multiplying this by the row vector $\vec 1$ now yields
\[
\vec 1 (\omega)_{(2k)}^+ \vec 1 = u \vec 1 = \vec 1\tfrac{1}{4} (\omega)_{(2k-1)} (J)_{(1)} = 0.
\]
\end{proof}

\section{Proof of Theorem \ref{theorem:image}}\label{section:proof}
In this final section we complete the proof of Theorem \ref{theorem:image} by establishing the equality $\mathfrak g(\Gamma) = \mathfrak g(\beta)$; indeed we show both of these are equal to $\mathfrak g$. We argue by induction on the degree of grading. The base cases are established in Lemma \ref{lemma:basecase}. To establish the inductive step, we perform the necessary calculation in Lemma \ref{lemma:calculation} and subsequently appeal to Lemma \ref{lemma:main}.

\begin{lemma}\label{lemma:basecase}
For $n \ge 4$ and $k \le 4$, there are surjections
\[
(\cdot)_{(k)}: B_n[s^k] \to G_{k}.
\]
\end{lemma}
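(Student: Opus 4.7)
The plan is to treat each of the four cases $k \in \{1,2,3,4\}$ separately. For $k=1$, I would expand $\beta(\sigma_i^2) = \beta(A_{i,i+1})$ through second order in $s = t-1$ directly from \eqref{equation:betadef} to read off $(\beta(A_{i,i+1}))_{(1)} = X_{i,i+1}$. Since $\{X_{ij}\}_{i<j}$ is a $\Z$-basis of $G_1$, and the $S_n$-action on $\mathfrak g(\Gamma)$ (Lemma \ref{lemma:gismodule}) is realized by conjugation in $B_n$, every $X_{ij}$ lies in the image. For $k=2$, Lemma \ref{lemma:bracketgrading} together with Lemma \ref{lemma:bracketformula} shows that $[A_{ij}, A_{ik}] \in B_n[s^2]$ has $(2)$-coefficient $\pair{X_{ij}, X_{ik}} = Y_{ijk}$; since $Y_{ijk}$ generates $G_2$ as an $S_n$-module (Lemma \ref{lemma:gSn}), $S_n$-conjugation inside $B_n$ completes the argument. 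For $k=4$, assuming the case $k=3$, the commutator formula \eqref{equation:commbrak} applied to $[b,b']$ with $b \in B_n[s^1]$ and $b' \in B_n[s^3]$ realizes $\pair{G_1, G_3}$, and this equals $G_4$ by Lemma \ref{lemma:brackets}.

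The delicate case is $k=3$. Lemma \ref{lemma:brackets} only guarantees $2G_3 \subseteq \pair{G_1, G_2}$, and an explicit $\F_2$-computation (already for $n=4$, using that the $S_4$-orbit of the ``rectangle'' brackets $\pair{X_{ij}, Y_{klm}}$ with $|\{i,j\}\cap\{k,l,m\}|=1$ spans only a $2$-dimensional subspace of $G_3 \otimes \F_2$) shows that $\pair{G_1, G_2}$ is a proper subgroup of $G_3$. Consequently, commutators in $[B_n[s^1], B_n[s^2]]$ alone cannot surject onto $G_3$. To supply the missing classes, I would invoke the construction of Lemma \ref{lemma:main}: for any $a = \sum_i X_{I_i} \otimes W_i \in K_1 := \ker(\pair{\cdot,\cdot}\colon G_1 \otimes G_1 \to G_2)$, choose pure braids $A_{J_i}$ with $(\beta(A_{J_i}))_{(1)} = W_i$ and form
\[
b = \prod_i [A_{I_i}, A_{J_i}] \in B_n[s^3],
\]
so that $(b)_{(3)} \equiv \phi_1(a) \pmod{\pair{G_1, G_2}}$. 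Combining these with the commutator braids realizing $\pair{G_1, G_2}$, the image of $(\cdot)_{(3)}\colon B_n[s^3] \to G_3$ contains $\pair{G_1, G_2} + \phi_1(K_1)$.

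To finish $k=3$, one must show $\phi_1\colon K_1 \to G_3/\pair{G_1,G_2}$ is surjective. Since both source and target are $S_n$-modules and $\phi_1$ is $S_n$-equivariant (inherited from the $S_n$-equivariance of the whole construction), it suffices to produce a single $a \in K_1$ whose $\phi_1$-image, under the $S_n$-action, generates the quotient. A natural candidate is $a = X_{12} \otimes (X_{13} + X_{23})$, which lies in $K_1$ because $\pair{X_{12}, X_{13}} + \pair{X_{12}, X_{23}} = Y_{123} - Y_{123} = 0$ by Lemma \ref{lemma:bracketformula}; the value $\phi_1(a)$ can then be computed explicitly via Lemma \ref{lemma:2k+1} once we record the coefficients $(\beta(A_{ij}))_{(2)}$ for $ij \in \{12, 13, 23\}$. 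The main obstacle will be verifying that the resulting element, together with its $S_n$-orbit, spans $G_3/\pair{G_1, G_2}$ integrally; if a direct computation proves inconclusive, Lemma \ref{lemma:commutes} provides a fallback by identifying $\phi_1$ with $\phi_3$ under compatible isomorphisms, so that the surjectivity check can be transferred to degree $5$, where the ``unexpectedly deep'' element $\delta$ of the introduction is designed to serve as the required witness.
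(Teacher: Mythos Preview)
Your treatment of $k=1,2,4$ matches the paper's proof essentially verbatim. The difficulty, as you correctly isolate, is $k=3$, and here there is a genuine gap.

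Your plan is to realize the missing part of $G_3$ via a map $\phi_1: K_1 \to G_3/\pair{G_1,G_2}$ built from Lemma~\ref{lemma:2k+1} and Lemma~\ref{lemma:main}. But both of those lemmas are stated and proved only for $k \ge 2$, and the restriction is essential. The proof of Lemma~\ref{lemma:2k+1} uses Lemma~\ref{lemma:sadicinverse} to assert $(Y^{-1})_{(\ell)} = -(Y)_{(\ell)}$ for $\ell$ up to $2k+1$; this requires $2k+1 \le 2(2k-1)-1$, i.e.\ $k \ge 2$. For $Y = \omega \in \Gamma[s]$ (the case $k=1$ you need), one has instead $(Y^{-1})_{(2)} = -(Y)_{(2)} + (Y)_{(1)}^2$ and a still more complicated $(Y^{-1})_{(3)}$, so the formula \eqref{equation:2k+1} acquires extra terms and the well-definedness argument of Lemma~\ref{lemma:main} no longer goes through as written. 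Your fallback via Lemma~\ref{lemma:commutes} fails for two independent reasons: that lemma also requires $k,\ell \ge 2$, and more basically the domains $K_1 \le G_1 \otimes G_1$ and $K_3 \le G_1 \otimes G_3$ are not identified by the ``evident isomorphisms'' of Lemma~\ref{lemma:commutes}, since $G_1$ and $G_3$ differ by the trace condition (so $G_1 \not\cong G_3$).

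What the paper actually does for $k=3$ is far more direct and sidesteps all of this machinery: it simply exhibits (found by a \textsc{sage} search) the explicit braid
\[
\alpha = [A_{13},A_{23}][A_{24},A_{14}][A_{14},A_{34}][A_{34},A_{24}],
\]
verifies by computation that $\alpha \in \Gamma[s^3]$ with $(\alpha)_{(3)} = X_{24}-X_{13}$, and then invokes Lemma~\ref{lemma:gSn}.\ref{item:odd}. Your candidate $b = [A_{12},A_{13}][A_{12},A_{23}]$ does lie in $B_n[s^3]$ for the reason you give, so a direct computation of $(b)_{(3)}$ (not via \eqref{equation:2k+1}) could in principle succeed; but as written your proposal relies on lemmas that do not apply, and the paper's element $\alpha$ is the one actually certified to work.
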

\begin{proof}
The case $k = 0$ follows from the definition of $\Gamma$ given in Definition \ref{definition:gamma}: we restrict $\Gamma$ to consist only of those matrices that reduce mod $s$ to an element of $S_n$ precisely because the image of $\beta$ mod $s$ is $S_n$.

One computes directly from the definition of $\beta$ that $(\beta(A_{12}))_{(1)} = X_{12}$; it follows from this and the $S_n$-module structure that
\[
(\beta(A_{ij}))_{(1)} = X_{ij}
\]
for any $1 \le i < j \le n$. This establishes the claim in the case $k = 1$. The claim for $k = 2$ now follows by the case $k = 1$ of Lemma \ref{lemma:brackets}.

To establish the case $k = 3$, we use {\sc sage} 
to compute that
\begin{equation}\label{equation:specialalpha}
\alpha = [A_{13},A_{23}][A_{24},A_{14}][A_{14},A_{34}][A_{34},A_{24}]
\end{equation}
is contained in $\Gamma[s^3]$ and that 
\[
(\alpha)_{(3)} = X_{24}-X_{13}.
\]
The result now follows from Lemma \ref{lemma:gSn}. Finally $k = 4$ follows, like $k = 2$, from Lemma \ref{lemma:brackets}.
\end{proof}

\begin{lemma}\label{lemma:calculation}
For $n \ge 5$ and $k \ge 2$, the homomorphism $\phi_{2k-1}$ of Lemma \ref{lemma:main} is surjective.
\end{lemma}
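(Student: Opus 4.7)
My approach is to reduce the problem to the single base case $k=2$ using the commutative diagram of Lemma \ref{lemma:commutes}, and then to verify surjectivity of $\phi_3$ via an explicit computation analogous to the one carried out in Lemma \ref{lemma:basecase}.

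For the reduction, Lemma \ref{lemma:commutes} identifies $\phi_{2k-1}$ with $\phi_{2\ell-1}$ via vertical isomorphisms $G_{2a+\epsilon} \cong G_{2b+\epsilon}$. These isomorphisms are $S_n$-equivariant and compatible with the matrix bracket, so they carry $\pair{G_1, G_{2k}}$ isomorphically onto $\pair{G_1, G_{2\ell}}$ and induce isomorphisms of the codomain quotients. Consequently, it suffices to prove surjectivity of $\phi_3 : K_3 \to G_5/\pair{G_1, G_4}$.

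For the base case, two structural facts reduce the task to a single computation. First, by Lemma \ref{lemma:brackets}, $2G_5 \le \pair{G_1,G_4}$, so the codomain of $\phi_3$ is $2$-torsion. Second, by Lemma \ref{lemma:gSn}\eqref{item:odd}, $G_5$ is generated as an $S_n$-module by any single element $X_{ij} - X_{k\ell}$ with $\#\{i,j,k,\ell\} \ge 3$. Since $\phi_3$ is $S_n$-equivariant (inherited from the conjugation action of $\Gamma$ on the terms $\Gamma[s^k]$ of the $s$-adic filtration), it is enough to produce one element $a \in K_3$ whose image $\phi_3(a) \in G_5/\pair{G_1,G_4}$ is represented by a nonzero element of the form $X_{ij} - X_{k\ell}$.

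To produce such an element, I would follow the spirit of Lemma \ref{lemma:basecase} and search over short products of commutators $\prod_i [\beta(A_{I_i}), \omega_i]$ in which each $\omega_i$ is itself a short product of pure braid generators with $(\omega_i)_{(3)} = W_i$ (the realizability of arbitrary $W_i \in G_3$ as $(\cdot)_{(3)}$ of an element of $B_n[s^3]$ follows from Lemma \ref{lemma:basecase} applied at $k=3$), subject to the closure condition $\sum X_{I_i} \otimes W_i \in K_3$. The main obstacle is the explicit construction of such a witness $\delta$; this is a concrete but delicate matrix calculation, best carried out by a computer search through short products. One must verify both that the resulting $\delta$ lies in $\Gamma[s^5]$ (requiring its $s$-adic expansion to vanish through degree $4$) and that $(\delta)_{(5)}$ reduces modulo $\pair{G_1, G_4}$ to an element of the form $X_{ij} - X_{k\ell}$, at which point the preceding $S_n$-module and reduction arguments deliver surjectivity for all $k \ge 2$.
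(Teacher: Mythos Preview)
Your outline is essentially the paper's own proof: reduce to $k=2$ via Lemma~\ref{lemma:commutes}, then exhibit a single $a \in K_3$ with $\phi_3(a) \equiv X_{ij}-X_{k\ell}$ modulo $\pair{G_1,G_4}$ and conclude by $S_n$-equivariance and Lemma~\ref{lemma:gSn}. The only substantive gap is that you stop short of producing the witness; the paper supplies it explicitly as $\delta = [\beta(A_{25}^2 A_{45}),\,[\alpha,\sigma_4]]$ with $\alpha$ as in \eqref{equation:specialalpha}, corresponding to $d = (2X_{25}+X_{45})\otimes(X_{24}-X_{25}) \in K_3$, and verifies by direct computation that $(\delta)_{(5)} \equiv X_{24}-X_{25} \pmod{2G_5}$.
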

\begin{proof}
Following Lemma \ref{lemma:commutes}, it suffices to consider the case $k=2$. This is established by a direct calculation. A computer search (again using {\sc sage}) shows that the element
\begin{equation}\label{equation:delta}
\delta = [\beta(A_{25}^2 A_{45}), [\alpha, \sigma_4]]
\end{equation}
with $\alpha$ as in \eqref{equation:specialalpha} is an element of $\Gamma[s^5]$ and satisfies
\[
(\delta)_{(5)} = \left( \begin{array}{ccccc} 0&2&0&2&-4\\ 2&-2&-2&1&1\\ 0&-2&0&-2&4\\ 2&1&-2&1&-2\\ -4&1&4&-2&1\end{array}\right)
\]
(this is illustrated here for $n = 5;$ for arbitrary $n$, $(\delta)_{(5)}$ is of course just extended by zeroes). Setting
\[
W = ([\alpha, \sigma_4])_{(3)} = X_{24}- X_{25}
\]
and
\[
d = (2 X_{25}+X_{45}) \otimes W \in K_3 \le G_1 \otimes G_3,
\]
the above calculation determines $\phi_3(d) := (\delta)_{(5)} \pmod{\pair{G_1,G_4}}$. Observe
\[
\left( \begin{array}{ccccc} 0&2&0&2&-4\\ 2&-2&-2&1&1\\ 0&-2&0&-2&4\\ 2&1&-2&1&-2\\ -4&1&4&-2&1\end{array}\right) = 
\left( \begin{array}{ccccc} 0&2&0&2&-4\\ 2&-2&-2&2&0\\ 0&-2&0&-2&4\\ 2&2&-2&0&-2\\ -4&0&4&-2&2\end{array}\right)+
\left( \begin{array}{ccccc} 0&0&0&0&0 \\ 0&0&0&-1&1\\ 0&0&0&0&0\\ 0&-1&0&1&0\\ 0&1&0&0&-1\end{array}\right).
\]
Lemma \ref{lemma:brackets} establishes the containment $2 G_5 \le \pair{G_1, G_4}$, so $\phi_3(d)$ is equal to the second summand $X_{24}-X_{25}$ modulo $\pair{G1,G4}$. By Lemma \ref{lemma:gSn},  this latter element generates $G_5$ as an $S_n$-module, and surjectivity of $\phi_3$ follows. 
\end{proof}

Finally we come to the induction step in the proof of Theorem \ref{theorem:image}.
\begin{lemma}\label{lemma:inductionstep}
For $n \ge 5$ and all $k \ge 0$, the homomorphism
\[
(\cdot)_{(k)}: B_n[s^k] \to G_{k}
\]
is surjective. Consequently there are equalities $\mathfrak g = \mathfrak g(\Gamma) = \mathfrak g(\beta)$, and by Proposition \ref{proposition:density}, Theorem \ref{theorem:image} holds.
\end{lemma}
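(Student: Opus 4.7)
The plan is to prove surjectivity of $(\cdot)_{(k)}: B_n[s^k] \to G_k$ by induction on $k$, drawing the base cases $0 \le k \le 4$ directly from Lemma \ref{lemma:basecase}. For the inductive step I assume surjectivity in all degrees $\le k$ and treat the cases $k+1$ even and $k+1$ odd separately, since Lemma \ref{lemma:brackets} behaves very differently in the two regimes.

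For the even case $k+1 = 2m$ with $m \ge 3$, Lemma \ref{lemma:brackets} gives the clean equality $\pair{G_1,G_{2m-1}} = G_{2m}$. Given $M \in G_{2m}$, I write $M = \sum_i \pair{X_i, Y_i}$ with $X_i \in G_1$ and $Y_i \in G_{2m-1}$; induction lets me realize $X_i = (\beta(a_i))_{(1)}$ and $Y_i = (\beta(b_i))_{(2m-1)}$ for some $a_i \in B_n[s]$, $b_i \in B_n[s^{2m-1}]$. Lemma \ref{lemma:bracketgrading} then guarantees that $\prod_i [a_i, b_i] \in B_n[s^{2m}]$ has $(2m)$-th coefficient equal to $M$.

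The odd case $k+1 = 2m+1$ with $m \ge 2$ is the real obstacle, since Lemma \ref{lemma:brackets} only provides $\pair{G_1,G_{2m}} \supseteq 2G_{2m+1}$ and ordinary commutators can therefore miss one coset in $G_{2m+1}/\pair{G_1,G_{2m}}$. The role of Lemma \ref{lemma:main} is to cover precisely this missing coset through more delicate braid-group commutators: for any $a = \sum_i X_{I_i}\otimes W_i \in K_{2m-1}$, induction at degree $2m-1$ lets me realize each $W_i$ as $(\beta(b_i))_{(2m-1)}$ for some $b_i \in B_n[s^{2m-1}]$, and the product $\alpha = \prod_i [\beta(A_{I_i}), \beta(b_i)]$ lies in $\beta(B_n) \cap \Gamma[s^{2m+1}]$ with $(\alpha)_{(2m+1)} \equiv \phi_{2m-1}(a) \pmod{\pair{G_1,G_{2m}}}$. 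Lemma \ref{lemma:calculation} establishes surjectivity of $\phi_3$ via the explicit element $\delta$, and Lemma \ref{lemma:commutes} identifies $\phi_{2m-1}$ with $\phi_3$ under the natural isomorphisms of the $G_k$, yielding surjectivity of every $\phi_{2m-1}$. Combining this with the subgroup $\pair{G_1,G_{2m}}$ already realized by ordinary commutators (the even-style bracket argument applied one degree up), every element of $G_{2m+1}$ is hit by $B_n[s^{2m+1}]$.

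With the induction complete, $(\cdot)_{(k)}: B_n[s^k] \to G_k$ is surjective for every $k \ge 0$. By Lemma \ref{lemma:gembed} this map factors as $B_n[s^k] \onto (\mathfrak g(\beta))_k \hookrightarrow (\mathfrak g(\Gamma))_k \hookrightarrow G_k$, so all three intermediate groups coincide with $G_k$ in every degree. Hence $\mathfrak g = \mathfrak g(\Gamma) = \mathfrak g(\beta)$, and Theorem \ref{theorem:image} follows immediately from Proposition \ref{proposition:density}.
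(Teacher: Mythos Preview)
Your proof is correct and follows essentially the same strategy as the paper's: induction on degree with the base cases from Lemma~\ref{lemma:basecase}, the even step handled by the bracket equality $\pair{G_1,G_{2m-1}}=G_{2m}$ from Lemma~\ref{lemma:brackets}, and the odd step handled by combining the realized subgroup $\pair{G_1,G_{2m}}$ with the surjectivity of $\phi_{2m-1}$ from Lemma~\ref{lemma:calculation}. The paper organizes the induction two degrees at a time and, rather than invoking the surjectivity of $\phi_{2m-1}$ abstractly, exhibits the single element $[\beta(A_{25}^2A_{45}),\omega]$ hitting $X_{24}-X_{25}\pmod{2G_{2k+1}}$ and then appeals to the $S_n$-action via Lemma~\ref{lemma:gSn}; but this is a cosmetic difference, since Lemma~\ref{lemma:calculation} is itself proved by exactly that computation.
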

\begin{proof}
Our inductive hypothesis is that the claim holds for all $j \le 2k$; Lemma \ref{lemma:basecase} establishes the base case $k =2$. Thus there is an element $\omega \in B_n[s^{2k-1}]$ with
\[
(\omega)_{2k-1} = X_{24}-X_{25}.
\]
Applying the calculation of Lemma \ref{lemma:calculation}, the element $[\beta(A_{25}^2A_{45}), \omega]$ is an element of $\beta[s^{2k+1}]$ and satisfies 
\begin{equation}\label{equation:isinimage}
([\beta(A_{25}^2A_{45}), \omega])_{(2k+1)} \equiv X_{24}-X_{25} \pmod{2G_{2k+1}}.
\end{equation}
By hypothesis, $(B_n[s^{2k-1}])_{(2k-1)} = G_{2k-1}$, and hence by the bracket formula of Lemma \ref{lemma:brackets}, 
\[
2G_{2k+1} \le (B_n[s^{2k+1}])_{(2k+1)}.
\]
Combining this with \eqref{equation:isinimage} shows that
\[
X_{24}-X_{25} \in (B_n[s^{2k+1}])_{(2k+1)}.
\]
Acting on $X_{24} - X_{25}$ by $S_n$, Lemma \ref{lemma:gSn}.\ref{item:odd} now shows that $(\beta[s^{2k+1}])_{(2k+1)} = G_{2k+1}$. The final assertion 
\[
(\beta[s^{2k+2}])_{(2k+2)} = G_{2k+2}
\]
follows from this and the bracket formula of Lemma \ref{lemma:brackets}. 
\end{proof}

\bibliography{burauimage}{}
\bibliographystyle{alpha}

\end{document}